\documentclass{amsart}
\usepackage{amsthm}
\textheight=600pt   \textwidth=490pt
\oddsidemargin=0pt \evensidemargin=5pt

\usepackage{amsfonts}
\usepackage{mathrsfs}
\usepackage{graphicx}
\usepackage{amsfonts}
\usepackage[dvipsnames,usenames]{color}
\usepackage{amsmath, amsthm, amsfonts, amssymb}
\usepackage{subfigure}

\usepackage{young}
\usepackage{comment}

\theoremstyle{definition}
\numberwithin{equation}{section}
\theoremstyle{remark} \hsize=7.5truein \vsize=8.6truein

\def\E{\mathbb{E}}

\def\be{{\bf{e}}}
 at 10 true pt

\def\be#1{ \begin{equation}\label{#1} }
\def\bas{\begin{align*}}
\def\eas{\end{align*}}
\def\bi{\begin{itemize}}
\def\ei{\end{itemize}}

\def\emph#1{{\it #1}}
\def\textbf#1{{\bf #1}}

%





\def \small {\scriptsize}

\def\tr{\mathrm{Tr}}

\parindent = 0pt
\parskip = 0.2cm

\theoremstyle{plain}
  \newtheorem{theorem}[subsection]{Theorem}

  \newtheorem{proposition}[subsubsection]{Proposition}
  
  \newtheorem{lemma}[subsection]{Lemma}

\theoremstyle{remark}
  \newtheorem{remark}[subsection]{\bf Remark}

\theoremstyle{definition}
  

\begin{document}
\title[Singular Sample Covariance Matrices]{New Methods for Handling Singular Sample Covariance Matrices}
\author{Gabriel H. Tucci}
\address{Gabriel H. Tucci is the global head of Central Risk and Cash Trading in the Equities division at Citi, 388 Greenwich Street, New York, NY 10013, USA. }
\email{gabrieltucci@gmail.com}
\thanks{This paper was presented in part at  the International Symposium on Information Theory, Boston, 2012.}

\author{Ke Wang}
\address{Ke Wang is with Department of Mathematics, Hong Kong University of Science and Technology, Clear Water Bay, Kowloon, Hong Kong}
\email{kewang@ust.hk}
\thanks{Ke Wang is supported by by HKUST Initiation Grant IGN16SC05.}

\begin{abstract}
The estimation of a covariance matrix from an insufficient amount of data is one of the most common problems in fields as diverse as multivariate statistics, wireless communications, signal processing, biology, learning theory and finance. In a joint work of Marzetta, Tucci and Simon, a new approach to handle singular covariance matrices was suggested. The main idea was to use dimensionality reduction in conjunction with an average over the Stiefel manifold. In this paper we continue with this research and we consider some new approaches to handle this problem. One of the methods is called the mean conjugate estimator under Ewens measure and uses a randomization of the sample covariance matrix over all the permutation matrices with respect to the Ewens measure. The techniques used to attack this problem are broad and run from random matrix theory to combinatorics.

\smallskip
\noindent \textbf{Index terms:} sample covariance matrix, random matrices, Stiefel manifold, Haar measure, Ewens measure
\end{abstract}

\maketitle

\section{Introduction}

The estimation of a covariance matrix from an insufficient amount of data is one of the most common problems in fields as diverse as multivariate statistics, wireless communications, signal processing, biology, learning theory and finance. For instance, the covariation between asset returns plays a crucial role in modern finance. The covariance matrix and its inverse are the key statistics in portfolio optimization and risk management. Many recent financial innovations involve complex derivatives, like exotic options written on the minimum, maximum or difference of two assets, or some structured financial
products, such as CDOs. All of these innovations are built upon, or in order to exploit, the correlation structure of two or more assets. In the field of wireless communications, covariance estimates allows us to compute the direction of arrival (DOA), which is a critical task in smart antenna systems since it enables accurate mobile location (see \cite{SN1,SN2}). Another application is in the field of biology and involves the interactions between proteins or genes in an organism and the joint time evolution of their interactions (see \cite{SS} for instance). 

Typically the covariance matrix of a multivariate random variable is not known but has to be estimated from the data. Estimation of covariance matrices then deals with the question of how to approximate the actual covariance matrix on the basis of samples from the multivariate distribution. Simple cases, where the number of observations is much greater than the number of variables, can be dealt with by using the sample covariance matrix. In this case, the sample covariance matrix is an unbiased and efficient estimator of the {\it true} covariance matrix. However, in many practical situations we would like to estimate
the covariance matrix of a set of variables from an insufficient amount of data. In this case the sample covariance matrix is singular (non--invertible) and therefore a fundamentally bad estimate. More specifically, let $X$ be a random vector $X=(X_1, \ldots, X_m)^T \in \mathbb{C}^{m\times 1}$ and assume for simplicity that $X$ is centered. Then the true covariance matrix is given by 
\begin{equation}
\Sigma = \mathbb{E} (X X^*)=(\mathrm{cov}(X_i, X_j))_{1\le i,j \le m}.
\end{equation}
Consider $n$ independent samples or realizations $x_1, \ldots, x_n \in \mathbb{C}^{m}$ and form the $m \times n$ data matrix $M=(x_1, \ldots, x_n)$. Then the sample covariance matrix is an $m\times m$ non--negative definite matrix defined as 
\begin{equation}
K=\frac{1}{n} M M^*.
\end{equation}
If $n\rightarrow +\infty$ and $m$ is fixed, then the sample covariance matrix $K$ converges (entrywise) to $\Sigma$ almost surely. Whereas, as we mentioned before, in many empirical problems, the number of measurements is less than the dimension $(n<m)$, and thus the sample covariance matrix is singular. Our objective in this paper is to recover the true covariance matrix $\Sigma$ from $K$ under the condition $n<m$.

The conventional treatment of covariance singularity artificially converts the singular sample covariance matrix into an invertible (positive definite) covariance by the simple expedient
of adding a positive diagonal matrix, or more generally, by taking a linear combination of the sample covariance and the identity matrix. This procedure is variously called ``diagonal loading'' or ``ridge regression'' \cite{RNE, DS98}. This one is defined as $\alpha K+ \beta I_m$ where $\alpha$ and $\beta$ are called loading parameters. The resulting matrix is positive definite, invertible and preserves the eigenvectors of the sample covariance. The eigenvalues of $\alpha K+ \beta I_m$ are a uniform rescaling and shift of the eigenvalues of $K$. There are many methods in choosing the optimum loading parameters, see \cite{LW02}, \cite{mestre08} and \cite{MD}. On the other hand, if the true covariance matrix is assumed to have some level of sparsity, several works have been established, such as the banding and thresholding methods studied by Bickel and Levina \cite{BL08a, BL08b}, Wu and Pourahmadi \cite{WP}, El Karoui \cite{EK} and Rothman et al. \cite{RBLZ}, to mention a few. In more recent works, Cai, Zhang and Zhou \cite{CZZ} and Cai and Zhou \cite{CZ} derive the optimal rate of convergence for estimating the true covariance matrix and its inverse under operator norm, Frobenius norm and $l_1$ norm, for a large range of sparse covariance matrices.

In Marzetta, Tucci and Simon's paper \cite{MTS} a new approach to handle singular covariance matrices was suggested. They use the idea of \emph{random dimension reduction}. Let $p\leq n$ be a parameter, to be estimated later, and consider the set of all $p\times m$ one-sided unitary matrices 
\begin{equation}\label{eq:set}
\Omega_{p,m}=\{\Phi \in \mathbb{C}^{p\times m}~:~ \Phi \Phi^* = I_p \}.
\end{equation} 
This set has a manifold structure and is called the Stiefel manifold. Note that $\Phi M$, that is the multiplication of the one-sided unitary matrix $\Phi$ with the data matrix $M$, results in a new data matrix with reduced dimension. And 
\begin{align}\label{eq:sam}
\frac{1}{n} (\Phi M)(\Phi M)^*=\Phi K \Phi^*
\end{align}
can be viewed as a new sample covariance matrix of size $p$. Then $\Phi^* (\Phi K \Phi^*) \Phi$ will project the data back to $n$-dimensional space. In \cite{MTS}, they endow the Stiefel manifold with the \emph{Haar measure}, that is, the uniform distribution on the set $\Omega_{p,m}$. Further, they define the operators 
\begin{equation*}
\mathrm{cov}_p(K) = \mathbb{E} (\Phi^* (\Phi K \Phi^*) \Phi);
\end{equation*}
\begin{equation*}
\mathrm{invcov}_p(K) = \mathbb{E} (\Phi^* (\Phi K \Phi^*)^{-1} \Phi),
\end{equation*}
where the expectation is taken with respect to the Haar measure. The operators $\mathrm{cov}_p(K)$ and $\mathrm{invcov}_p(K)$ are used to estimate the true covariance matrix $\Sigma$ and its inverse $\Sigma^{-1}$ respectively. It was found that 
$$
\mathrm{cov}_p(K) = \frac{p}{(m^2-1)m} \Big( (mp-1) K + (m-p) \mathrm{Tr}(K) I_m \Big),
$$ 
which is the same as diagonal loading. Moreover, they investigated the properties of $\mathrm{invcov}_p(K)$. If $K$ is decomposed as $K= U D U^*$, with $D=\mathrm{diag}(d_1,\ldots,d_n,0,\ldots,0)$, then 
$$
\mathrm{invcov}_p(K)= U \mathrm{invcov}_p(D) U^*,
$$
and
\begin{align}\label{eq:invD}
\mathrm{invcov}_p(D)=\mathrm{diag} (\lambda_1, \ldots, \lambda_n, \mu, \ldots, \mu).
\end{align}
In other words, $\mathrm{invcov}_p(K)$ preserves the eigenvectors of $K$, and transforms all the zero eigenvalues to a non--zero constant value. They also provided formulas to compute the values of $\lambda_i$ and $\mu$, and studied their asymptotic behavior using techniques from free probability. 

The explicit formula of $\lambda_i$'s of $\mathrm{invcov}_p(D)$ in \eqref{eq:invD} is derived in \cite{MTS} as a partial derivative of a rather complicated integral (see (11) and Theorem 1 in \cite{MTS}). In this paper, we further investigate the properties of the $\mathrm{invcov}_p(K)$ or equivalently the $\mathrm{invcov}_p(D)$ operators. These results are presented in Section \ref{sec:invcov}. We first show that $\mathrm{invcov}_p(D)$ has a surprisingly simple algebraic structure, i.e. it is a polynomial of the diagonal matrix $D$. We also provide formulas to compute the coefficients of the polynomial and illustrate the computation through a small dimensional example in Appendix \ref{sec:small}. The formulas involve complicated combinatorial subjects and thus make further investigation on the performance, i.e. optimize the error functions with respect to the parameters, rather difficult. 

Therefore, it is natural to look for alternative random operators that are easy to compute, analyze and implement. It is known that a random unitary matrix with Haar measure behaves asymptotically like a random uniform permutation matrix (see \cite{Wieand1} and \cite{Wieand2}). Our first attempt is to conjugate the sample covariance matrix $K$ with a permutation matrix $M_{\sigma}$. In \cite{Viana}, the mean conjugate $K_1=\mathbb{E}(M_{\sigma} K M_{\sigma}^T)$ of a square matrix $K$ averaging over uniform permutation matrix $M_{\sigma}$ is studied. It is found in \cite{Viana} that $K_1$ is always a scalar multiple of identity matrix plus a rank-one matrix (see Remark \ref{rem:uniform}), which is a well-conditioned matrix in most cases. 

Now we investigate the mean conjugate of a matrix $K$ under a generalized measure on the permutation group, called the Ewens measure with parameter $\theta>0$ (see \eqref{eq:ewens} below). We obtain a closed form expression for the estimator $K_\theta=\mathbb{E}(M_{\sigma} K M_{\sigma}^T)$ in Theorem \ref{permutation} using combinatorial techniques. We find that the averaging operation on diagonal matrices is equivalent to the conventional diagonal loading (see Remark \ref{rem:diag}). For the matrix $K$ with certain structures, the averaging over all permutation matrices under Ewens measure by choosing $\theta$ propositional to the dimension $m$, is asymptotically equivalent to \emph{linear shrinkage estimator} proposed by Lenoit and Wolf \cite{LW04}. This result is proved in Section \ref{sec:asymptotic}. We propose this new method to estimate the covariance matrices and call it the \emph{mean conjugate estimator under Ewens measure}.


In Section \ref{sec:mix}, we extend the ideas of constructing the $\mathrm{cov}_p(K)$ and $\mathrm{invcov}_p(K)$ operators by replacing random unitary matrices with random permutation matrices.  We first extend the definition of permutation matrices to get $p\times m$ unitary matrices $V_{\sigma}$ and use the Ewens measure  in Section \ref{sec:ewens}. Then we define two new operators 
$$
K_{\theta,m,p}:= \mathbb{E} \big( V_{\sigma}^T (V_{\sigma} K V_{\sigma}^T) V_{\sigma} \big)
$$
$$
\tilde{K}_{\theta,m,p}:=\mathbb{E}\big(V_\sigma^T(V_\sigma K V_\sigma^T)^{+}V_\sigma\big)
$$
to estimate $\Sigma$ and $\Sigma^{-1}$ respectively. Here $A^+$ is the \emph{Moore-Penrose pseudo inverse} of the $A$. If $A$ is an $m\times n$ complex or real matrix, then $A^+$ is an $n\times m$ complex or real matrix that satisfies $AA^+$ and $A^+ A$ are both Hermitian or symmetric, $AA^+A=A$ and $A^+ A A^+=A$. For any matrix $A$, the pseudo inverse $A^+$ always exists. We provide an explicit formula for $K_{\theta,m,p}$ and an inductive formula to compute $\tilde{K}_{\theta,m,p}$.

In Section \ref{sec:simulation}, we first study the asymptotic behavior for certain matrices with the mean conjugate estimator under Ewens measure.  We conduct some simulation study focusing on the mean conjugate estimator under Ewens measure. However, we do not include the simulations on the hybrid operators $K_{\theta,m,p}$ and $\tilde{K}_{\theta,m,p}$ since currently we do not have adequate understanding on them from explicit formulas obtained in Section \ref{sec:mix}.

{\bf Notation:} Throughout this paper, $\mathbf{1}_S$ is the indicator function of a set $S$. We sometimes use $[n]$ to present the set $\{1,2,\ldots,n\}$, and $\mathrm{Tr}(A)$ is the trace of a matrix $A$. For an $m \times m$ matrix $A$, we use the (normalized) Frobenius norm $\| A \|_F = \frac{1}{\sqrt{m}}\sqrt{ \mathrm{Tr}(A A^*) }$. We denote $A^{+}$ the Moore-Penrose pseudo inverse of the matrix $A$.  For a vector $v=(v_1,\ldots,v_m)$, we use the Euclidean norm $\| v \|_2 = \sqrt{\sum_{i=1}^m |v_i |^2}$.  We use $v(k)$ to denote the $k$th entry of $v$. We use $\mathbf{e}=(1,\ldots,1)^T$ to represent the all-one vector and $e_i$ are the standard basis vectors.  We use the notation $\kappa \vdash n$ to indicate that $\kappa$ is an integer partition of the positive integer $n$.

{\bf Acknowledgement:} We would like to thank the anonymous referees for their careful reading and many insightful suggestions.

\section{Some Properties of the $\mathrm{invcov}_p$ Estimator}\label{sec:invcov}
We first collect some preliminaries about Schur polynomials that will be needed later in studying the properties of the $\mathrm{invcov}_p$ estimator.
\subsection{Preliminaries of Schur polynomials}\label{sec_prelim}
A symmetric polynomial is a polynomial $P(x_1,x_2,\ldots, x_n)$ in $n$ variables such that if any of the variables are interchanged one obtains the same polynomial. Formally, $P$ is a symmetric polynomial if for any permutation $\sigma$ of the set $\{1,2,\ldots, n\}$ one has that 
$$
P(x_{\sigma(1)},x_{\sigma(2)},\ldots, x_{\sigma(n)})=P(x_1,x_2,\ldots, x_{n}).
$$
Symmetric polynomials arise naturally in the study of the relation between the roots of a polynomial in one variable and its coefficients, since the coefficients can be given by a symmetric polynomial expressions in the roots. Symmetric polynomials also form an interesting structure by themselves. The resulting structures, and in particular the ring of symmetric functions, are of great importance in combinatorics and in representation theory (see for instance \cite{Fulton, Muir, Mac, Sagan} for more on details on this topic).

The Schur polynomials are certain symmetric polynomials in $n$ variables. This class of polynomials is also very important in representation theory since they are the characters of irreducible representations of the general linear groups. The Schur polynomials are indexed by partitions. A partition of a positive integer $n$, also called an integer partition, is a way of writing $n$ as a sum of positive integers. Two partitions that differ only in the order of their summands are considered to be the same partition. Therefore, $\kappa=(\kappa_1,\ldots,\kappa_n)\vdash n$ is a partition of a positive integer of $n$ if 
$$
\sum_{i=1}^{n}{\kappa_i}=n \hspace{0.5cm} \text{with} \hspace{0.5cm} \kappa_{1}\geq \kappa_2\geq\ldots\geq \kappa_{n}\geq 0.
$$ 
The $\kappa_i$'s are called the \emph{parts} of $\kappa$. Notice that some of the $\kappa_i$ could be zero. Sometimes, we use another equivalent way to represent a partition. We write $\kappa=(1^{r_1},2^{r_2},\ldots,n^{r_n})\vdash n$ where $r_i$ is the number of $i$ appearing as parts in $\kappa$. Thus $\sum_{i=1}^n i\cdot r_i=n$. Integer partitions are usually represented by the so called Young's diagrams (also known as Ferrers' diagrams). A Young diagram is a finite collection of boxes, or cells, arranged in left--justified rows, with the row lengths weakly decreasing (each row has the same or shorter length than its predecessor). Listing the number of boxes on each row gives a partition $\kappa$ of a non-negative integer $n$, the total number of boxes of the diagram. The Young diagram is said to be of shape $\kappa$, and it carries the same information as that of partition. For instance, in Figure \ref{young} we can see the Young diagram corresponding to the partition $(5,4,1)$ of the number 10.
\begin{figure}[!ht]
  \begin{center}
    \includegraphics[width=3cm]{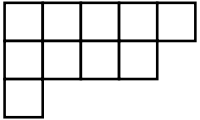}
    \caption{\small{Young digram representation of the partition $(5,4,1)$.} }
    \label{young}
  \end{center}
\end{figure}
Given a partition $\kappa$ of $m$. Assume $m\ge n$. The Schur polynomial of shape $\kappa$ in the variables $(d_1,\ldots,d_n)$ is defined as 
$$ 
s_\kappa (d_1,\ldots,d_n) = \frac{\mathrm{det} (d_i^{n+\kappa_j -j })_{i,j=1}^n }{\mathrm{det} (d_i^{n-j})_{i,j=1}^n}.
$$ 
Indeed the denominator $\mathrm{det} (d_i^{n-j})_{i,j=1}^n$  is the determinant of the Vandermonde matrix
\begin{align}\label{eq:vandermonde}
\Delta(d_1,\ldots,d_n)=\begin{pmatrix}
1& 1 & \cdots & 1\\
d_1 & d_2 & \cdots &d_n\\
\vdots & \vdots & \ddots & \vdots\\
d_1^{n-1} & d_2^{n-1} & \cdots & d_n^{n-1}
\end{pmatrix}
.
\end{align}
 The numerator $\mathrm{det} (d_i^{n+\kappa_j -j })_{i,j=1}^n$ is an alternating polynomial (in other words it changes sign under any transposition of the variables):
\begin{equation*}
\mathrm{det} (d_i^{n+\kappa_j -j })_{i,j=1}^n
= \sum_{\sigma\in S_n}\epsilon(\sigma)d_{\sigma(1)}^{\kappa_1}\cdots d_{\sigma(n)}^{\kappa_n},
\end{equation*}
where $S_{n}$ is the permutation group of the set $\{1,2,\ldots,n\}$ and $\epsilon(\sigma)$ is the sign of the permutation $\sigma$. 
 
Thus $s_\kappa (d_1,\ldots,d_n)$ is a symmetric function because the numerator and denominator are both alternating, and is a polynomial since all alternating polynomials are divisible by the Vandermonde determinant (see \cite{Fulton,Mac, Sagan} for more details here). For instance, $
s_{(2,1,1)} (x_1, x_2, x_3) = x_1 \, x_2 \, x_3 \, (x_1 + x_2 + x_3) 
$
and
\begin{equation*}
s_{(2,2,0)} (x_1, x_2, x_3)  =  x_1^2 \, x_2^2 + x_1^2 \, x_3^2 + x_2^2 \, x_3^2 + x_1^2 \, x_2 \, x_3 + x_1 \, x_2^2 \, x_3 + x_1 \, x_2 \, x_3^2.  
\end{equation*}
Another related definition is the \emph{Hook length}, $\mathrm{hook}(x)$, of a box $x$ in Young diagram of shape $\kappa$. This is defined as the number of boxes that are in the same row to the right of it plus those boxes in the same column below it, plus one (for the box itself). For instance, in Figure \ref{young}, the hook length of the top-left corner box is $4+2+1=7$. The product of the hook's length of a partition is the product of the hook lengths of all the boxes in the partition. 

Next, we collect a few properties of Schur polynomials $s_\kappa (d_1,\ldots,d_n)$ used in later proofs. For an $n\times n$ matrix $A$ with eigenvalues $\alpha_1,\ldots,\alpha_n$, we use $s_\kappa(A) = s_\kappa (\alpha_1,\ldots,\alpha_n)$. Denote by $(n-k,1^k)$ the partition $(n-k,1,1,\ldots,1)$ with $k$ ones. One of the basic properties of Schur polynomials is that for any integer $l\ge 1$,
\begin{align}\label{eq:prop1}
\text{Tr}(A^l) = \sum_{k=0}^{n-1} (-1)^k s_{(n-l,1^l)} (A).
\end{align}
Let $D_n$ be a diagonal matrix of size $n\times n$. Consider $\Omega_{p,n}$, the Stiefel manifold defined in \eqref{eq:set}, associated with the Haar measure $d\phi$. For any $\Phi \in \Omega_{p,n}$, it is proved in \cite[equation (18)]{FK06} that 
\begin{align}\label{eq:prop2}
\int_{\Omega_{p,n}} s_{\kappa} (\Phi D_n \Phi^*) \, d\phi = \frac{s_{\kappa} (D_n) s_{\kappa} (I_n)}{s_{\kappa} (I_p)}. 
\end{align}

Schur polynomials have a close connection with the border strips of partitions. We follow the definitions in Stanley's book \cite[Chapter 7.17]{Stanley}. A \emph{border strip} is a set of boxes in the Young diagram that forms a contiguous strip and has at most one box on each diagonal. The \emph{height} of a border strip is one less than its number of rows. Given a partition $\lambda\vdash n$ and a decomposition $\rho=(\rho_1,\ldots,\rho_l)$ of $n$. A \emph{border strip tableau} $\chi^{\kappa}(\rho)$ of shape $\kappa$ and type $\rho$ is obtained by replacing each box in the Young diagram of $\kappa$ by one of the integers $\{1,2,\ldots,l\}$ so that the boxes replaced by $i$ form a $\rho_i$ border strip in the diagram which consists of all boxes replaced by  $\{1,2,\ldots,i\}$.

By the celebrated Murnaghan--Nakayama rule (see Corollary 7.17.5 in \cite{Stanley}),  
\begin{equation} \label{eq:MN}
s_{(n-j, 1^j)}(D) = \sum_{\rho=(1^{r_1}, 2^{r_2},\ldots, n^{r_n}) \vdash n} \chi^{(n-j, 1^j)}(\rho) \prod_{l=1}^n \frac{\mathrm{Tr}(D^l)^{r_l}}{l^{r_l} r_l!},
\end{equation} 
where $\chi^{\kappa}(\rho) = \sum_{T} (-1)^{\text{ht}(T)}$ sums over all border-strip tableaux of shape $\kappa$ and type $\rho$. Here $\text{ht} (T)$ is the \emph{height} of a border-strip tableaux (see Section 7.17 in \cite{Stanley} for more details).

\subsection{A new property of the $invcov_p$ estimator}
 Recall $\mathrm{invcov}_p(K)=\mathbb{E} (\Phi^* (\Phi K \Phi^*)^{-1} \Phi)$. We first collect the properties of the $\mathrm{invcov}_p(K)$ estimator obtained in the previous work of Marzetta, Tucci and Simon \cite[Section IV and VI]{MTS}.

\begin{proposition} For a positive semi-definite matrix $K$ of size $m$, one can decompose $K=U D U^*$ where $U$ is unitary and $D=\mathrm{diag}(d_1,\ldots,d_m)$.
\begin{enumerate}\label{prop:invcov}
\item The eigenvectors of $K$ are preserved under the $\mathrm{invcov}_p$ operatoration. More precisely, $\mathrm{invcov}_p(K)=U \mathrm{invcov}_p (D) U^*$ and $\mathrm{invcov}_p (D)$ is diagonal.  

\item The zero-eigenvalues of $K$ are converted to equal positive values. If $D=\text{diag}(D_n, 0_{m-n})$ where $D_n=(d_1,\ldots,d_n)$ is of full rank, then $\mathrm{invcov}_p (D)=\text{diag}(\Lambda_L(D_n), \mu I_{m-n})$ where $\Lambda_L(D_n)=\text{diag}(\lambda_1,\ldots,\lambda_n)$. Besides, for any $1\le k \le n$,
\begin{align}\label{eq:formulaLambba}
\lambda_k=\frac{\partial }{\partial d_k} \int_{\Omega_{p,n}} \mathrm{Tr} (\log (\Phi D_n \Phi^*) )\,d\phi \quad \text{and}\quad \mu= \frac{\det (G)}{\det (\Delta(d_1,\ldots,d_n))}.
\end{align}
Here $\Delta(d_1,\ldots,d_n)$ is the Vandermonde matrix in \eqref{eq:vandermonde} and $G$ is the matrix constructed by replacing the $p$th row of $\Delta(d_1,\ldots,d_n)$ by the row
$$(d_1^{n-(p+1)} \log(d_1),\cdots, d_n^{n-(p+1)} \log(d_n)).$$

\end{enumerate}
\end{proposition}

We prove a new property of the $\mathrm{invcov}_p(K)$ estimator. We will show that $\mathrm{invcov}_p(K)$ has a surprisingly simple algebraic structure despite its rather complicated expression. Assume $K=U D U^*$ where $U$ is unitary and $D=\mathrm{diag}(d_1,\ldots,d_m)$. By Proposition \ref{prop:invcov}, it is enough to study the properties of  $\mathrm{invcov}_p (D)$. 

Let $\mathcal{A}(D)$ be the algebra generated by the matrices $D$ and the $m\times m$ identity matrix $I_{m}$. By the Cayley--Hamilton Theorem, it is clear that 
\begin{equation}
\mathcal{A}(D)=\Big\{ \alpha_{m-1} D^{m-1}+\alpha_{m-2} D^{m-2}+\ldots+\alpha_1 D+\alpha_0 I_m \hspace{0.2cm}:\hspace{0.2cm} \alpha_i \in \mathbb{C} \Big\}.
\end{equation}
We define $\mathcal{D}_m$ as the set of all $m\times m$ diagonal matrices.
\begin{lemma}\label{diag} 
Let $D=\mathrm{diag}(d_1,\ldots,d_m)$ be an $m\times m$ diagonal matrix. If $d_i \neq d_j$ for $i\neq j$ then $\mathcal{A}(D) = \mathcal{D}_m$. If $d_i = d_j$ for some $i \neq j$ then 
$$
\mathcal{A}(D) =\{ \mathrm{diag}(b_1,\ldots,b_i,\ldots,b_i,\ldots,b_m) \,\,:\,\, b_k \in \mathbb{C}\},
$$ 
the set of all diagonal matrices where the $i$th and $j$th entries are equal.
\end{lemma}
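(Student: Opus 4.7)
The plan is to use the fact that every element of $\mathcal{A}(D)$ has the form $p(D)$ for some polynomial $p$ of degree at most $m-1$, together with Lagrange interpolation on the distinct eigenvalues of $D$.

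First I would observe that since $D$ is diagonal, so is every power $D^k$, hence $\mathcal{A}(D) \subseteq \mathcal{D}_m$. Moreover, by the Cayley--Hamilton description of $\mathcal{A}(D)$ already cited, an arbitrary element of $\mathcal{A}(D)$ has the form
\begin{equation*}
p(D) = \mathrm{diag}\bigl(p(d_1), p(d_2), \ldots, p(d_m)\bigr),
\end{equation*}
where $p(x) = \alpha_{m-1} x^{m-1} + \cdots + \alpha_0$ ranges over all polynomials of degree at most $m-1$. So the question reduces to determining which tuples $\bigl(p(d_1), \ldots, p(d_m)\bigr) \in \mathbb{C}^m$ are attainable.

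For the first case, where the $d_i$ are pairwise distinct, Lagrange interpolation gives, for any prescribed $(b_1, \ldots, b_m) \in \mathbb{C}^m$, a polynomial $p$ of degree at most $m-1$ with $p(d_i) = b_i$ for every $i$. Hence every diagonal matrix lies in $\mathcal{A}(D)$, proving $\mathcal{A}(D) = \mathcal{D}_m$.

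For the second case, where $d_i = d_j$ for some $i \neq j$, the forward containment is immediate: any polynomial $p$ satisfies $p(d_i) = p(d_j)$, so every element of $\mathcal{A}(D)$ must have equal $i$-th and $j$-th diagonal entries. For the reverse containment, let $e_1, \ldots, e_r$ denote the distinct values taken by $d_1, \ldots, d_m$; note $r \leq m-1$ because $d_i = d_j$. Given any target diagonal matrix $\mathrm{diag}(b_1, \ldots, b_m)$ with $b_i = b_j$ and, more generally, with $b_k = b_l$ whenever $d_k = d_l$, apply Lagrange interpolation at the $r$ distinct nodes $e_1, \ldots, e_r$ with the well-defined target values assigned to each node. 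This produces a polynomial $p$ of degree at most $r-1 \leq m-1$ with $p(d_k) = b_k$ for all $k$, so the target matrix belongs to $\mathcal{A}(D)$.

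No real obstacle arises; the only point requiring care is the interpretation of the lemma's Case 2 statement, which describes the attainable diagonals as those satisfying precisely the forced coincidence $b_i = b_j$, which is exactly what the Lagrange argument produces.
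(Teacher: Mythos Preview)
Your proof is correct and follows essentially the same approach as the paper: the paper phrases Case~1 as inverting the Vandermonde matrix to solve for the coefficients $\alpha_k$, which is exactly Lagrange interpolation in different clothing, and then dismisses Case~2 as ``essentially the same.'' Your treatment of Case~2 is actually more careful than the paper's, since you explicitly handle the reverse containment by interpolating at the distinct values $e_1,\ldots,e_r$, and you correctly flag that the lemma's stated description (a single forced coincidence $b_i=b_j$) must be read as imposing $b_k=b_l$ whenever $d_k=d_l$.
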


\begin{proof}
First assume $d_i\neq d_j$ for all $i\neq j$. It is clear to see $\mathcal{A}(D) \subset \mathcal{D}_m$. On the other hand, for any $B=\text{diag}(b_1,\ldots,b_m) \in \mathcal{D}_m$, we form a system of linear equations,  $$\left( \begin{matrix} b_1 \\ \vdots \\ b_m \end{matrix} \right) =\left( \begin{matrix} 1 & d_1 & d_1^2 & \ldots &d_1^{m-1} \\ \vdots& &\ldots & &\vdots \\ 1 & d_m & d_m^2 & \ldots & d_m^{m-1} \end{matrix} \right) \left( \begin{matrix} \alpha_0 \\ \vdots \\ \alpha_{m-1} \end{matrix} \right) := V \left( \begin{matrix} \alpha_0 \\ \vdots \\ \alpha_{m-1} \end{matrix} \right).$$
The matrix $V$ is a Vandermonde matrix with $\text{det}(V) = \prod_{i<j} (d_i -d_j)$. The matrix $V$ is invertible by our assumption. Thus we can find a vector $(\alpha_0, \ldots, \alpha_{m-1})$ such that $$B=\alpha_0 I_m+ \alpha_1 D+\ldots + \alpha_{m-1} D^{m-1} \in \mathcal{A}(D).$$
This completes the proof. To prove the second part we use essentially the same approach as before.
\end{proof}

\begin{theorem}
The matrix $\mathrm{invcov}_p (D)$ belongs to the algebra $\mathcal{A}(D).$
\end{theorem}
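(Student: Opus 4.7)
The plan is to combine the permutation symmetry of the Haar measure on $\Omega_{p,m}$ with Lemma \ref{diag}. The paragraph preceding the lemma already recalls from \cite{MTS} that $\mathrm{invcov}_p(D)$ is itself diagonal, so by Lemma \ref{diag} it suffices to verify that the $i$th and $j$th diagonal entries of $\mathrm{invcov}_p(D)$ coincide whenever $d_i = d_j$. In other words, every symmetry of the diagonal matrix $D$ must descend to $\mathrm{invcov}_p(D)$.

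To push symmetry through the expectation, I would introduce, for each permutation $\sigma \in S_m$ with $d_{\sigma(k)} = d_k$ for all $k$, the corresponding $m \times m$ permutation matrix $P_\sigma$. Then $P_\sigma D P_\sigma^* = D$, and since $P_\sigma$ is unitary the map $\Phi \mapsto \Phi P_\sigma$ is a Haar-measure-preserving bijection of the Stiefel manifold $\Omega_{p,m}$ onto itself. Substituting inside the defining expectation,
\begin{align*}
\mathrm{invcov}_p(D)
&= \mathbb{E}\bigl[(\Phi P_\sigma)^* (\Phi P_\sigma\, D\, P_\sigma^* \Phi^*)^{-1} (\Phi P_\sigma)\bigr] \\
&= P_\sigma^*\, \mathbb{E}\bigl[\Phi^* (\Phi D \Phi^*)^{-1} \Phi\bigr]\, P_\sigma \\
&= P_\sigma^*\, \mathrm{invcov}_p(D)\, P_\sigma.
\end{align*}
Because $\mathrm{invcov}_p(D)$ is diagonal, conjugating by the transposition $(i\,j)$ (allowed precisely when $d_i = d_j$) interchanges its $i$th and $j$th diagonal entries while leaving the remaining entries untouched, so the identity above forces those two entries to agree. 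Lemma \ref{diag} then places $\mathrm{invcov}_p(D)$ in $\mathcal{A}(D)$.

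I do not anticipate a substantive obstacle. The only two small points to confirm are that $(\Phi D \Phi^*)^{-1}$ exists Haar-almost surely, which holds because $p \leq n = \mathrm{rank}(D)$ and the set of $\Phi \in \Omega_{p,m}$ where $\Phi D \Phi^*$ drops rank is a proper algebraic subvariety and hence Haar-null; and that right multiplication by a unitary $m \times m$ matrix preserves the Haar measure on $\Omega_{p,m}$, which is immediate from the $U(p) \times U(m)$ bi-invariance characterizing it. The real content of the argument is the observation that $\mathrm{invcov}_p$, being defined by a Haar average, inherits every symmetry of its diagonal argument $D$, and that Lemma \ref{diag} is exactly the statement that such invariance characterizes membership in $\mathcal{A}(D)$.
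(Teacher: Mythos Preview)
Your proof is correct and shares the paper's overall strategy: reduce via Lemma \ref{diag} to showing that $d_i=d_j$ forces the $i$th and $j$th diagonal entries of $\mathrm{invcov}_p(D)$ to coincide. Where the two arguments diverge is in how this coincidence is established. The paper invokes the formula from \cite{MTS} expressing each $\lambda_k$ as $\partial F/\partial d_k$ for $F(d_1,\ldots,d_n)=\int_{\Omega_{p,n}}\mathrm{Tr}\log(\Phi^* D_n\Phi)\,d\phi$, and then appeals to the Schur polynomial expansion of the moments $\int \mathrm{Tr}(\Phi^* D_n\Phi)^l\,d\phi$ to conclude that $F$ is symmetric, hence $\partial_i F=\partial_j F$ at points where $d_i=d_j$. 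You bypass both the derivative formula and the Schur machinery entirely, arguing directly from right $U(m)$-invariance of the Haar measure on $\Omega_{p,m}$: any permutation matrix $P_\sigma$ commuting with $D$ yields $\mathrm{invcov}_p(D)=P_\sigma^*\,\mathrm{invcov}_p(D)\,P_\sigma$, and taking $\sigma=(i\,j)$ finishes. Your route is shorter and more elementary, and it makes transparent that the result is a pure consequence of equivariance rather than of any explicit integral formula; the paper's route, on the other hand, sets up the Schur polynomial framework that is reused in the subsequent computation of $\mathbb{E}(\Phi(\Phi^* D_n\Phi)^l\Phi^*)$.
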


\begin{proof}
By Proposition \ref{prop:invcov}, if the matrix $D$ is equal to $D=\text{diag}(D_n, 0_{m-n})$ where $D_n=(d_1,\ldots,d_n)$ is of full rank, then $\mathrm{invcov}_p (D)=\text{diag}(\Lambda_L(D_n), \mu I_{m-n})$ where $\Lambda_L(D_n)=\text{diag}(\lambda_1,\ldots,\lambda_n)$. And
$$
\lambda_k=\frac{\partial F(d_1,\ldots,d_n)}{\partial d_k},
$$ 
where we define $F(d_1,\ldots,d_n):= \int_{\Omega_{p,n}} \mathrm{Tr}( \log (\Phi D_n \Phi^*) )d\phi$ for brevity. Recall $\Phi \in \Omega_{p,n}$ defined in \eqref{eq:set}. By \eqref{eq:prop1} and \eqref{eq:prop2}, for any integer $l \ge 1$
$$
\int_{\Omega_{p,n}} \mathrm{Tr}\big( (\Phi D_n \Phi^*) \big)^l \,d\phi = \sum_{k=0}^{p-1} (-1)^k c_k^{(n,p)} s_{(l-k,1^k)}(D_n),
$$ 
where $s_{(l-k,1^k)}(D_n)$ are the Schur polynomials and $c_k^{(n,p)}$ are  explicit constants (see (78) in \cite{MTS}). From Lemma \ref{diag}, it is enough to show that if $d_i=d_j$ for some $i \neq j$, then $\lambda_i =\lambda_j$.
 By linearity and continuity, $F(d_1,\ldots,d_n)$ is symmetric. Hence assuming $d_i=d_j$, ${\partial F}/{\partial d_i}={\partial F}/{\partial d_j}$, which implies $\lambda_i =\lambda_j$. This completes the proof.
\end{proof}

\subsection{Formulas for computing $\mathbb{E}(\Phi^* (\Phi D_n \Phi^*)^l \Phi)$.}\label{sec:formula}

In order to obtain the explicit formulas of $\mathrm{cov}_p$ and $\mathrm{invcov}_p$ in \cite{MTS}, it involves computing $\mathbb{E}(\Phi^* f(\Phi D_n \Phi^*) \Phi)$ for a differentiable function $f(x)$ (see parts A and B in section VI in \cite{MTS}) and a diagonal matrix $D_n=\text{diag}(d_1,\ldots,d_n)$ with all $d_i$'s positive. For instance, \cite[Lemma 1]{MTS} asserts that if $f$ is differentiable on the interval $[\min\{d_i\}, \max\{d_i\}]$, then 
$$\frac{\partial }{\partial d_k} \int_{\Omega_{p,n}} \mathrm{Tr} (f (\Phi D_n \Phi^*) )\, d\phi = \Big(\int_{\Omega_{p,n}} \Phi^* f' (\Phi D_n \Phi^*) \Phi \, d\phi  \Big)_{kk}= \mathbb{E} \big( \Phi^* f' (\Phi D_n \Phi^*) \Phi \big)_{kk}.$$
Note the eigenvalue $\lambda_k$ of $\mathrm{invcov}_p(D)$ given in \eqref{eq:formulaLambba} is the left hand side of above identity with $f(x)=\log x$. To  further understand the $\mathrm{invcov}_p$ operator, it is helpful to have the explicit formula for the eigenvalues $\lambda_k$'s. By continuity and linearity, it is enough to provide formulas for computing $\mathbb{E}(\Phi^* (\Phi D_n \Phi^*)^l \Phi)$. In this subsection, we derive such formulas.

First, we observe that $\mathbb{E}(\Phi^* (\Phi D_n \Phi^*)^l \Phi)$ is still a diagonal matrix. The idea of proof is exactly the same as the proof of Proposition \ref{prop:invcov}. We recall a fact that a matrix $A$ is diagonal if and only if $\Omega^* A \Omega = A$ for any diagonal unitary matrix $\Omega$. Note that
\begin{align*}
\Omega^*\mathbb{E}(\Phi^* (\Phi D_n \Phi^*)^l \Phi) \Omega = \E \Big( (\Phi \Omega)^* \big( \Phi \Omega (\Omega^* D_n \Omega)(\Phi \Omega)^* \big)^l \Phi \Omega \Big) = \mathbb{E}(\Phi^* (\Phi D_n \Phi^*)^l \Phi),
\end{align*}
where we use that $ \Phi \Omega$ has the same distribution as $\Omega$, and $\Omega^* D_n \Omega= D_n$.

To compute the diagonal entries of $\mathbb{E}(\Phi^* (\Phi D_n \Phi^*)^l \Phi)$, using Lemma 1 in \cite{MTS}, we have 
\begin{align}\label{eq:for}
\big( \mathbb{E}(\Phi^* (\Phi D_n \Phi^*)^l \Phi) \big)_{ii}= \Big( \int_{\Omega_{p,n}} \Phi^* (\Phi D_n \Phi^*)^l \Phi \, d\phi  \Big)_{ii} = \frac{\partial}{\partial d_i} \int_{\Omega_{p,n}} \frac{1}{l+1} \mathrm{Tr} (\Phi D_n \Phi^*)^{l+1} \, d \phi.
\end{align}
Denote $N=l+1$ for convenience. By \eqref{eq:prop1} and \eqref{eq:prop2}, we see that
\begin{align}\label{eq:int}
\int_{\Omega_{p,n}} \mathrm{Tr} \big( (\Phi D_n \Phi^*)^{N} \big) \, d \phi &= \sum_{j=0}^{p-1} (-1)^j \frac{s_{(N-j,1^j)} (I_p)}{s_{(N-j,1^j)} (I_n)} s_{(N-j,1^j)}(D_n) \nonumber\\
&=\sum_{j=0}^{p-1} (-1)^j \frac{(N+p-(j+1))! (n-(j+1))!}{(N+n-(j+1))! (p-(j+1))!} s_{(N-j,1^j)}(D_n).
\end{align}

Using the formula \eqref{eq:MN}, one has
\begin{equation}
\begin{split}
\frac{\partial s_{(N-j, 1^j)}(D_n)}{\partial d_i} &= \sum_{k=1}^N d_i^{k-1} \Big( \sum_{\rho=(1^{r_1}, 2^{r_2},\ldots, N^{r_N}) \vdash N} \chi^{(N-j, 1^j)}(\rho)\frac{r_k \mathrm{Tr}(D^k)^{r_k-1} }{k^{r_k-1} r_k!} \prod_{l\neq k} \frac{\mathrm{Tr}(D^l)^{r_l}}{l^{r_l} r_l!} \Big)\\
&:=\sum_{k=1}^N d_i^{k-1}\cdot \tilde{c}_{k-1}=\sum_{k=0}^{N-1} \tilde{c}_{k}d_i^{k}.
\end{split}
\end{equation}

Therefore, combining \eqref{eq:for} and \eqref{eq:int}, we obtain
\begin{equation*}
\begin{split}
\big( \mathbb{E}(\Phi (\Phi^* D \Phi)^l \Phi^*) \big)_{ii}&= \frac{1}{l+1} \sum_{j=0}^{p-1} (-1)^j \frac{(l+1+p-(j+1))! (n-(j+1))!}{(l+1+n(j+1))! (p-(j+1))!} \frac{\partial s_{(N-j,1^j)}(D_n)}{\partial d_i}\\
&=\sum_{k=0}^l \Big( \frac{\tilde{c}_k}{l+1} \sum_{j=0}^{p-1} (-1)^j \frac{(l+p-j)! (n-j-1))!}{(l+n-j)! (p-j-1)!}  \Big) d_i^k :=\sum_{k=0}^l a_k d_i^k.
\end{split}
\end{equation*}

The coefficients $a_k$ depend only on $D_n,p$ and $l$. Thus we are able to show $\mathbb{E}(\Phi^* (\Phi D_n \Phi^*)^l \Phi)$ is a polynomial in $D_n$ of degree $l$,
$$
\mathbb{E} \big(\Phi^* (\Phi D_n \Phi^*)^l \Phi \big)=\sum_{k=0}^l a_k D_n^k
$$
where the coefficients are
\begin{align*}
a_k&=\frac{1}{l+1} \Big(\sum_{j=0}^{p-1} (-1)^j \frac{(l+p-j)! (n-j-1))!}{(l+n-j)! (p-j-1)!}\Big) \\
&\quad\quad\quad\quad\quad \cdot \Big( \sum_{\rho=(1^{r_1}, \ldots, (l+1)^{r_{l+1}}) \vdash l+1} \chi^{(l+1-j, 1^j)}(\rho)\frac{r_{k+1} \mathrm{Tr}(D^{k+1})^{r_{k+1}-1} }{{(k+1)}^{r_{k+1}-1} r_{k+1}!} \prod_{l\neq {k+1}} \frac{\mathrm{Tr}(D^l)^{r_l}}{l^{r_l} r_l!} \Big). 
\end{align*}

In the Appendix \ref{sec:small}, we provide a small dimensional example to show how to apply the derived formula for computation.

\section{The mean conjugate estimator under Ewens measure}\label{sec:ewens}

Let $S_m$ be the set of permutations of the set $[m]:=\{1,2,\ldots,m\}$. For each permutation $\sigma\in S_m$, by cycle decomposition, $\sigma$ can be viewed as the disjoint union of cycles of varying lengths. The Ewens measure is a probability measure on the set of permutations that depends on a parameter $\theta>0$. In this measure, each permutation has a weight proportional to its total number of cycles.  More specifically, for each permutation $\sigma$ in $S_m$ its probability is equal to 
\begin{align}\label{eq:ewens}
p_{\theta, m}(\sigma) = \frac{\theta^{\#\text{cycl}(\sigma)}}{\theta(\theta+1)\ldots(\theta+m-1)},
\end{align}
where $\theta >0$ and $\#\text{cycl}(\sigma)$ is the number of cycles in $\sigma$. The case $\theta=1$ corresponds to the uniform measure. This measure has recently appeared in mathematical physics models (see e.g. \cite{3} and \cite{9}) and one has only recently started to gain insight into the cycle structures of such random permutations.

Let $\sigma$ be a permutation in $S_m$, the corresponding permutation matrix $M_\sigma$ is the $m \times m$ matrix defined as $M_\sigma(i,j)=\mathbf{1}_{\sigma(i)}(j).$ If we denote $e_i$ to be a $1 \times m$ vector such that the $i$--th entry is equal to $1$ and all the others entries are $0$, then 
$$
M_\sigma=\left(\begin{matrix} e_{\sigma(1)}\\ \vdots\\ e_{\sigma(m)} \end{matrix} \right),
$$ 
which is, of course, a unitary matrix. Given the sample covariance matrix $K$ we define the new estimator for $\Sigma$ as
\begin{equation}\label{eq:K}
K_\theta := \mathbb{E} (M_{\sigma} K M_{\sigma}^*),
\end{equation}
where the expectation is taken with respect to the Ewens measure of parameter $\theta$.

\begin{theorem} \label{permutation}
Let $K=(a_{ij})$ be an $m \times m$ matrix in $\mathbb{C}^{m\times m}$. Then
$K_\theta= \mathbb{E} (M_{\sigma} K M_{\sigma}^*)$ is an $m \times m$ matrix such that the diagonal terms satisfy
\begin{equation}
(K_\theta)_{ii}=\frac{\theta-1}{\theta+m-1}a_{ii}+\frac{1}{\theta+m-1}\mathrm{Tr}(K),
\end{equation}
and the non--diagonal terms $(i \neq j)$ satisfy
\begin{equation}
\begin{split}
(K_\theta)_{ij}&=\frac{1}{(\theta+m-2)(\theta+m-1)} \Big( \theta^2 a_{ij} + (\theta-1) a_{ji} + \theta \sum_{k \neq i,j} (a_{ik}+ a_{kj} ) + \sum_{l \neq i, k \neq j \atop k\neq l} a_{lk}  \Big)\\
&= \frac{1}{(\theta+m-2)(\theta+m-1)} \Big( (\theta^2-1) a_{ij} + (\theta-1) a_{ji} + (\theta-1) \sum_{k \neq i,j} (a_{ik}+ a_{kj} ) + \sum_{l \neq k} a_{lk} \Big).
\end{split}
\end{equation}
\end{theorem}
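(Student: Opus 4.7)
From the row description $M_\sigma = (e_{\sigma(1)}, \ldots, e_{\sigma(m)})^T$, a direct computation gives $(M_\sigma K M_\sigma^*)_{ij} = a_{\sigma(i), \sigma(j)}$, hence
\[
(K_\theta)_{ij} \;=\; \sum_{k,l} a_{kl}\, \mathbb{P}_\theta[\sigma(i)=k,\,\sigma(j)=l],
\]
and the problem reduces to computing one- and two-point marginals of the Ewens measure. Writing $f_n(\theta) := \theta(\theta+1)\cdots(\theta+n-1)$ for the normalizing constant on $\mathcal{S}_n$, deleting $i$ from a permutation with $\sigma(i)=i$ gives a weight-preserving bijection (modulo one cycle) onto $\mathcal{S}_{m-1}$; this yields $\mathbb{P}_\theta[\sigma(i)=i] = \theta/(\theta+m-1)$, and by symmetry and normalization $\mathbb{P}_\theta[\sigma(i)=k] = 1/(\theta+m-1)$ for $k \neq i$. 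Substituting into $(K_\theta)_{ii} = \sum_k a_{kk}\,\mathbb{P}_\theta[\sigma(i)=k]$ produces the stated diagonal formula.

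For $i \neq j$, injectivity forces $k \neq l$, and I would split the pairs $(k,l)$ with $k \neq l$ into seven cases according to how $\{k,l\}$ meets $\{i,j\}$: (a) $(i,j)$, (b) $(j,i)$, (c) $k=i,\,l\notin\{i,j\}$, (d) $k=j,\,l\notin\{i,j\}$, (e) $l=j,\,k\notin\{i,j\}$, (f) $l=i,\,k\notin\{i,j\}$, (g) $k,l\notin\{i,j\}$ with $k\neq l$. In cases (a), (b), (c), (e) at least one of $i,j$ is a fixed point or $\{i,j\}$ is a 2-cycle; removing the fixed points from the cycle structure gives Ewens weights $\theta^2 f_{m-2}$, $\theta f_{m-2}$, $\theta f_{m-2}$, $\theta f_{m-2}$ respectively. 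In case (d) the constraints force a length-$\ge 3$ chain $i \to j \to l$ inside a cycle; the ``bypass'' map $\tau(i):=l$, $\tau(a):=\sigma(a)$ for $a\notin\{i,j\}$ is a cycle-count-preserving bijection onto $\{\tau \in \mathcal{S}([m]\setminus\{j\}) : \tau(i)=l\}$, yielding weight $f_{m-2}$; case (f) is symmetric. For (g) the cleanest route is subtraction: the weighted totals across the case counts $(1,1,m-2,m-2,m-2,m-2,(m-2)(m-3))$ must sum to $f_m = (\theta+m-2)(\theta+m-1)\,f_{m-2}$, which forces each case-(g) pair to carry weight $f_{m-2}$ as well.

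Dividing all weights by $f_m$ converts them to probabilities; substituting into the off-diagonal sum and grouping terms by their coefficient ($\theta^2$, $\theta$, or $1$) produces the first displayed identity. The equivalent second form is then a purely algebraic rearrangement: write the restricted sum $\sum_{l\neq i,\,k\neq j,\,k\neq l} a_{lk}$ as the full off-diagonal sum $\sum_{k\neq l} a_{lk}$ minus its missing boundary terms, which shifts the coefficients of $a_{ij}$, $a_{ji}$, and $\sum_{k\neq i,j}(a_{ik}+a_{kj})$ from $\theta^2$, $\theta$, $\theta$ down to $\theta^2-1$, $\theta-1$, $\theta-1$. The main technical obstacle is the bypass argument in (d) and (f): one must verify that shortcutting the middle element of the forced chain gives a well-defined bijection onto permutations of an $(m-1)$-element set satisfying the prescribed two-point condition, and that the number of cycles is exactly preserved. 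Everything else is mechanical algebraic bookkeeping.
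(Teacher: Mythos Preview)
Your proof is correct and follows essentially the same approach as the paper: compute $(M_\sigma K M_\sigma^*)_{ij}=a_{\sigma(i)\sigma(j)}$, then split according to the values $(\sigma(i),\sigma(j))$ and reduce each case to an Ewens weight on $\mathcal{S}_{m-1}$ or $\mathcal{S}_{m-2}$ via an ``erase/bypass'' bijection. The only cosmetic differences are that the paper lumps your cases (d), (f), (g) into a single ``neither $i$ nor $j$ fixed, not a swap'' case (asserting the erase-both-$i$-and-$j$ bijection directly), whereas you separate out the adjacent-in-cycle subcases and recover (g) by a normalization/subtraction argument; both routes give the same weights.
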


\begin{remark}\label{rem:uniform}
If $\theta=1$, then 
\begin{align}\label{eq:unf}
K_1=\alpha \frac{\mathbf{e} \mathbf{e}^T}{m} + \beta (I_m- \frac{\mathbf{e} \mathbf{e}^T}{m})\hspace{0.2cm}\text{where}\hspace{0.2cm} \alpha=\frac{\mathbf{e}^T K\mathbf{e}}{m}=\frac{\sum_{i,j}a_{ij}}{m}~ \text{,}~ \beta= \frac{\mathrm{Tr}(K)-\alpha}{m-1}
\end{align} 
and $\mathbf{e}=(1,1,\ldots,1)^T$. This result already been shown in Prop. 2.2 of \cite{Viana}. 
\end{remark}

\begin{remark}\label{rem:diag}
If $K=D=\text{diag}(d_1,\ldots,d_m)$, then 
$$
K_\theta= \frac{\theta-1}{\theta+m-1} D+ \frac{\mathrm{Tr}(D)}{\theta+m-1} I_m,
$$ 
which corresponds to the diagonal loading.
\end{remark}

\begin{proof} First, $$M_\sigma K M^* = \left(\begin{matrix} e_{\sigma(1)} \\ \vdots \\ e_{\sigma(m)} \end{matrix} \right) K \big( \begin{matrix} e^*_{\sigma(1)} & \cdots & e^*_{\sigma(m)} \end{matrix} \big) = \Big(\sum_{l=1}^m \sum_{k=1}^m a_{kl} e_{\sigma(i)}(k) e_{\sigma(j)}(l) \Big) = (a_{\sigma(i) \sigma(j)})_{1\le i, j \le m}.$$

For diagonal terms, recall the probability measure $p_{\theta, m}$ in \eqref{eq:ewens},
\begin{equation*}
\begin{split}
(K_\theta)_{ii}&=\big(\mathbb{E} (M_{\sigma} K M_{\sigma}^*)\big)_{ii}= \sum_{\sigma \in S_m} p_{\theta, m}(\sigma) a_{\sigma(i) \sigma(i)} = a_{ii} \sum_{\sigma \in S_m \atop \sigma(i) =i } p_{\theta, m}(\sigma)+ \sum_{l \neq i} a_{ll} \sum_{\sigma \in S_m \atop \sigma(i) =l} p_{\theta, m}(\sigma)\\
&= a_{ii} \frac{\theta}{\theta+m-1}  \sum_{\tilde\sigma \in S_{m-1}} p_{\theta,m-1}(\tilde\sigma) + \sum_{l\neq i} \frac{a_{ll}}{\theta+m-1} \sum_{\hat\sigma(l) }  p_{\theta,m-1}(\hat\sigma(l)) \\
&= \frac{\theta}{\theta+m-1} a_{ii} + \frac{1}{\theta+m-1} \sum_{l \neq i}  a_{ll}=\frac{\theta-1}{\theta+m-1}a_{ii}+ \frac{1}{\theta+m-1} \mathrm{Tr}(K).
\end{split}
\end{equation*}

Now we compute the off--diagonal terms $(K_\theta)_{ij} ~(i \neq j)$. For $\sigma \in S_m$, if $\sigma(i)=i$ and $\sigma(j)=j$ then $\sigma=(i)(j)\sigma_1$ with $\sigma_1 \in S_{m-2}$, $\#\text{cycl}(\sigma)=\#\text{cycl}(\sigma_1)+2$ and 
$$
p_{\theta,m}(\sigma)=\frac{\theta^2}{(\theta+m-2)(\theta+m-1)} p_{\theta,m-2}(\sigma_1).
$$
If $\sigma(i)=j$ and $\sigma(j)=i$ we erase $i$ and $j$ from $\sigma$ to obtain $\sigma_2 \in S_{m-2}$, and 
$$
p_{\theta, m}(\sigma)= \frac{\theta}{(\theta+m-2)(\theta+m-1)} p_{\theta, m-2}(\sigma_2).
$$
If $\sigma(i)=i$ and $\sigma(j)=k\neq i,j$ then $\sigma=(i)\hat\sigma$ with $\hat\sigma \in S_{m-1}$ and $\#\text{cycl}(\sigma)=\#\text{cycl}(\hat\sigma)+1$. Furthermore, we can erase $j$ from $\hat\sigma$ to get a new permutation $\sigma_3(k) \in S_{m-2}$ such that $\#\text{cycl}(\sigma_3(k))=\#\text{cycl}(\hat\sigma)$ and finally 
$$
p_{\theta, m}(\sigma)= \frac{\theta}{(\theta+m-2)(\theta+m-1)} p_{\theta, m-2}(\sigma_3(k)).
$$ 
Notice that $\sum_{\sigma_3(k)}p_{\theta, m-2}(\sigma_3(k))=1$.

If $\sigma(i)=l\neq i,j$ and $\sigma(j)=j$ then as above we can have $\sigma_4(l) \in S_{m-2}$ such that 
$$
p_{\theta, m}(\sigma)= \frac{\theta}{(\theta+m-2)(\theta+m-1)} p_{\theta, m-2}(\sigma_4(l))
$$ 
and again $\sum_{\sigma_4(l)}p_{\theta, m-2}(\sigma_4(l))=1.$

If $\sigma(i)=l \neq i$ and $\sigma(j) =k \neq j$ ($k\neq l$) we exclude the case that $\sigma(i)=j, \sigma(j)=i$ and we erase $i$ and $j$ from $\sigma$ to obtain $\sigma_5(l,k) \in S_{m-2}$. Thus 
$$
p_{\theta, m}(\sigma)= \frac{1}{(\theta+m-2)(\theta+m-1)} p_{\theta, m-2}(\sigma_5(l,k))
$$
and $\sum_{\sigma_5(l,k)}p_{\theta, m-2}(\sigma_5(l,k))=1.$ 

Therefore, for $i \neq j$ 
\begin{equation*}
\begin{split}
&(K_\theta)_{ij} = \sum_{\sigma \in S_m } p_{\sigma, m}(\sigma) a_{\sigma(i) \sigma(j)} \\
&= a_{ij}\frac{\theta^2}{(\theta+m-2)(\theta+m-1)} \sum_{\sigma_1 \in S_{m-2} }  p_{\theta,m-2}(\sigma_1)\\
& + a_{ji}\frac{\theta}{(\theta+m-2)(\theta+m-1)} \sum_{\sigma_2 \in S_{m-2}}  p_{\theta,m-2}(\sigma_2) \\
&+  \sum_{k \neq i,j} a_{ik} \frac{\theta}{(\theta+m-2)(\theta+m-1)} \sum_{\sigma_3(k) \in S_{m-2}} p_{\theta, m-2}(\sigma_3(k))  \\
&+ \sum_{l \neq i,j} a_{lj} \frac{\theta}{(\theta+m-2)(\theta+m-1)} \sum_{\sigma_4(l) \in S_{m-2}} p_{\theta, m-2}(\sigma_4(l)) \\ 
&+ \sum_{\substack{k \neq i,j \text{ and } l \neq i,j\\ k \neq l}}  \sum_{\sigma_5(k,l) \in S_{m-2} } a_{lk} \frac{1}{(\theta+m-2)(\theta+m-1)} p_{\theta, m-2}(\sigma_5(k,l))\\
&= \frac{1}{(\theta+m-2)(\theta+m-1)} \Big( \theta^2 a_{ij} + (\theta-1) a_{ji} + \theta \sum_{k \neq i,j} (a_{ik}+ a_{kj} ) + \sum_{\substack{k \neq i,j \text{ and } l \neq i,j\\ k \neq l}}  a_{lk}  \Big).
\end{split}
\end{equation*}
\end{proof}

\section{Hybrid Method}\label{sec:mix}

In this section, we combine the ideas of the first two methods to create a third hybrid method. First, we extend the definition of a permutation. For an integer $p\le m$, let 
$$
S_{p,m}:=\Big\{ \sigma~ : \sigma~ \text{an injection from} ~\{1,2,\ldots,p\}~ \text{to}~ \{1,2,\ldots m\} \Big\}.
$$
The size of the set $S_{p,m}$ is $\frac{m!}{(m-p)!}$ and it is clear that $S_{m,m}$ is the set of all permutations on $[m]$. For $\sigma \in S_{p,m}$, the associated $p\times m$ matrix takes the form
$$
V_{\sigma}:=\left( \begin{matrix} e_{\sigma(1)} \\ e_{\sigma(2)} \\ \vdots \\e_{\sigma(p)}  \end{matrix} \right),
$$
where $e_{\sigma(i)}=(e_{\sigma(i)}^1,e_{\sigma(i)}^2,\ldots,e_{\sigma(i)}^m)$ is a $1\times m$ row vector with the $\sigma(i)$--th entry 1 and all others 0. Notice
\begin{equation}
V_{\sigma} V_{\sigma}^{\text{T}} = I_{p}, 
\end{equation} 
and 
\begin{equation} \label{eq:projection} 
P_{\sigma}:=V_{\sigma}^{\text{T}} V_{\sigma} =\mathrm{diag}(b^{\sigma}_1,\ldots,b^{\sigma}_m),
\end{equation}
where
$$
   b^{\sigma}_i = \sum_{l=1}^p (e_{\sigma(l)}(i))^2 =\left\{
     \begin{array}{lr}
       1  \,\,\, \mathrm{if}~ i \in \{\sigma(1),\ldots,\sigma(p)\},\\
       0  \,\,\, \mathrm{otherwise}.
     \end{array}
     \right.
$$
Next, we use the \emph{Ewens measure} on the permutation sets to define a probability on the set $S_{p,m}$. For each $\sigma \in S_{p,m}$, consider the set 
$$
\Omega_{\sigma} := \Big\{ \tilde\sigma \in S_m~:~ \tilde\sigma  _{\{1,\ldots,p \} }=\sigma \Big\}.
$$ 
In other words, $\Omega_{\sigma}$ is the set of all permutations in $S_{m}$ whose restriction to the set $\{1,2,\ldots, p\}$ is equal to $\sigma$. Recall that $p_{\theta,m}$ is the \emph{Ewens measure} on $S_m$ with parameter $\theta$. Define the probability measure on $S_{p,m}$ for $\sigma\in S_{p,m}$ as
\begin{equation}
\mu_{\theta,m,p}(\sigma):=p_{\theta,m}(\Omega_{\sigma})=\sum_{\tilde{\sigma} \in \Omega_{\sigma}} p_{\theta,m}(\tilde\sigma).
\end{equation}

Now we are ready to introduce two new operators
\begin{equation}\label{eq:ewens1}
K_{\theta,m,p}:= \mathbb{E} \Big( V_{\sigma}^T (V_{\sigma} K V_{\sigma}^T) V_{\sigma} \Big)
\end{equation} 
\begin{equation}\label{eq:ewens2} 
\tilde{K}_{\theta,m,p}:=\mathbb{E}\Big(V_\sigma^T(V_\sigma K V_\sigma^T)^{+}V_\sigma\Big),
\end{equation}
where $(V_\sigma K V_\sigma^T)^{+}$ is the Moore--Penrose pseudo inverse of the matrix $V_\sigma K V_\sigma^T$. Recall the Moore--Penrose pseudo inverse of a square matrix $A$ is a matrix $A^+$ of the same size and satisfies $AA^+$ and $A^+ A$ are both Hermitian, $AA^+A=A$ and $A^+ A A^+=A$. We use $K_{\theta,m,p}$ as an estimate for $\Sigma$ and $\tilde{K}_{\theta,m,p}$ for $\Sigma^{-1}$. Now we show a few results on these new estimators.

\begin{theorem}
Let $K=(a_{ij})$ be an $m\times m$ complex matrix. Then $K_{\theta,m,p}$ as in \eqref{eq:ewens1} is an $m\times m $ matrix such that the diagonal entries are equal to
$$
(K_{\theta,m,p})_{ii} =\left\{
     \begin{array}{lr}
       \frac{\theta+p-1}{\theta+m-1} a_{ii}, ~\mathrm{if}~1\le i\le p, \\
          \\
       \frac{p}{\theta+m-1} a_{ii}, ~\mathrm{if}~p+1\le i\le m,
     \end{array}
     \right.
$$
and the non--diagonal entries, assuming $i<j$ (if $j<i$ then exchange $i$ and $j$ in the following expression) are equal to 
$$
(K_{\theta,m,p})_{ij} =\left\{
     \begin{array}{lr}
       \frac{(\theta+p-1)(\theta+p-2)}{(\theta+m-1)(\theta+m-2)} a_{ij}   , ~\mathrm{if}~1\le i < j \le p ,\\
          \\
       \frac{(p-1)(\theta+p-1)}{(\theta+m-1)(\theta+m-2)} a_{ij}  , ~\mathrm{if}~1\le i\le p <j \le m,\\
       \\
       \frac{p(p-1)}{(\theta+m-1)(\theta+m-2)} a_{ij}  , ~\mathrm{if}~p< i< j \le m.
     \end{array}
     \right.
$$
\end{theorem}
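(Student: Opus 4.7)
The plan is as follows. First I would expand $V_\sigma^T(V_\sigma K V_\sigma^T)V_\sigma$ in coordinates: since the rows of $V_\sigma$ are the standard unit vectors $e_{\sigma(1)},\ldots,e_{\sigma(p)}$, a direct computation shows
$$\bigl(V_\sigma^T(V_\sigma K V_\sigma^T)V_\sigma\bigr)_{ij}\;=\;a_{ij}\,\mathbf{1}\{i\in\sigma([p])\}\,\mathbf{1}\{j\in\sigma([p])\}.$$
Taking expectations and using the definition of $\mu_{\theta,m,p}$ as the pushforward of $p_{\theta,m}$ under restriction from $\mathcal{S}_m$ to $[p]$, one obtains
$$(K_{\theta,m,p})_{ij}\;=\;a_{ij}\cdot p_{\theta,m}\bigl(\{\tilde\sigma\in\mathcal{S}_m:\ i,j\in\tilde\sigma([p])\}\bigr).$$
The Ewens measure depends only on cycle type, hence is invariant under $\tilde\sigma\mapsto\tilde\sigma^{-1}$; writing $\tau=\tilde\sigma^{-1}$ reduces the task to computing $P(i,j):=p_{\theta,m}\bigl(\tau(i)\le p,\ \tau(j)\le p\bigr)$.

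For the diagonal case $i=j$ only the one-variable marginal is needed. Conditioning on whether $i$ is a fixed point of $\tau$ reduces the Ewens measure on $\mathcal{S}_m$ to the Ewens measure on $\mathcal{S}_{m-1}$ with the same parameter and yields $p_{\theta,m}(\tau(i)=k)=\theta/(\theta+m-1)$ for $k=i$ and $1/(\theta+m-1)$ otherwise. Summing over $k\in[p]$ and splitting on whether $i\in[p]$ or $i>p$ immediately produces the two diagonal formulas.

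For $i\neq j$ the heart of the proof is the joint distribution of $(\tau(i),\tau(j))$. I would classify the pairs $(k,l)$ with $k\neq l$ into seven combinatorial types according to how $\{k,l\}$ intersects $\{i,j\}$: both $i,j$ fixed, only $i$ fixed, only $j$ fixed, a $2$-cycle on $\{i,j\}$, $\tau(i)=j$ with $\tau(j)\notin\{i,j\}$, the mirror case, and neither image in $\{i,j\}$. For the first four types the prescribed cycle structure extracts one or two cycles and reduces the measure to the Ewens measure on $\mathcal{S}_{m-2}$, yielding joint probabilities $\theta^2/D$, $\theta/D$, $\theta/D$, and $\theta/D$ respectively, with $D:=(\theta+m-1)(\theta+m-2)$. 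For the three residual types a direct cycle reduction is less transparent, so instead I would use the marginal $p_{\theta,m}(\tau(i)=k)$ together with the consistency identity $\sum_{l\neq k}p_{\theta,m}(\tau(i)=k,\tau(j)=l)=p_{\theta,m}(\tau(i)=k)$ to pin down the common joint probability $1/D$.

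With the joint distribution in hand, I would finally sum over $(k,l)\in[p]^2$ with $k\neq l$, splitting into the three regimes $\{i,j\in[p]\}$, $\{i\in[p],\,j>p\}$, and $\{i,j>p\}$. In each regime only a subset of the seven types contributes; algebraic simplification along the lines of $\theta^2+(2p-3)\theta+(p-1)(p-2)=(\theta+p-1)(\theta+p-2)$ then produces the numerators $(\theta+p-1)(\theta+p-2)$, $(p-1)(\theta+p-1)$, and $p(p-1)$ respectively, and division by $D$ yields the claimed formulas. The main obstacle is the seven-way case analysis in the joint-distribution step: one must keep rigorous track of which types are available in each regime and verify via the global normalization $\sum_{k\neq l}p_{\theta,m}(\tau(i)=k,\tau(j)=l)=1$ that the residual joint probability is indeed $1/D$.
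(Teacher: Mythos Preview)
Your proof is correct and follows essentially the same strategy as the paper: both reduce the $(i,j)$ entry to the Ewens probability of the event $\{i,j\in\sigma([p])\}$ and evaluate it by a case analysis on how the preimages of $i$ and $j$ sit relative to $\{i,j\}$, each case reducing to the Ewens measure on $\mathcal{S}_{m-1}$ or $\mathcal{S}_{m-2}$. Your lift to $\mathcal{S}_m$ together with the inversion $\tau=\tilde\sigma^{-1}$ is a tidy repackaging of the paper's sum over $s,t\in[p]$ with $\sigma(s)=i,\,\sigma(t)=j$, and your seven-type split with the consistency/normalization argument for the residual $1/D$ probability corresponds to the paper's five-case split (the paper merges your types 5, 6, 7 into one case and obtains the $1/D$ directly by cycle removal).
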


\begin{remark}
In the special case that $K=\mathrm{diag}(d_1,\ldots,d_m)$ is a diagonal matrix , then 
$$
K_{\theta,m,p}= \frac{p}{\theta+m-1} K + \frac{\theta-1}{\theta+m-1} \mathrm{diag}(d_1,\ldots,d_p, 0, \ldots,0).
$$
For instance, if $p=1$ and $m=3$ then 
$$
K_{\theta,3,1} =\frac{1}{\theta+2} \mathrm{diag} (\theta a_{11}, a_{22}, a_{33}).
$$
\end{remark}

\begin{remark}
In the general case with $p=2$ and $m=3$ then 
$$
K_{\theta,3,2} =\frac{1}{\theta+2} \left( \begin{matrix} (\theta+1) a_{11} & \theta a_{12} & a_{13} \\ \theta a_{21} & (\theta+1) a_{22} & a_{23} \\ a_{31} & a_{32} & 2a_{33} \end{matrix} \right).
$$
\end{remark}

\begin{proof}
Recall from Equation \eqref{eq:projection} that 
$$
P_{\sigma}=V_{\sigma}^{\text{T}} V_{\sigma} =\mathrm{diag}(b_1^{\sigma},\ldots,b_m^{\sigma}),
$$ 
thus $ V_{\sigma}^T (V_{\sigma} K V_{\sigma}^T) V_{\sigma}  = (b_i^{\sigma} b_j^{\sigma} a_{ij})_{1\le i, j \le m},$ where
$$b^{\sigma}_i = \sum_{l=1}^p (e_{\sigma(l)}(i))^2 =\left\{
     \begin{array}{lr}
       1  \,\,\, \mathrm{if}~ i \in \{\sigma(1),\ldots,\sigma(p)\},\\
       0  \,\,\, \mathrm{otherwise}.
     \end{array}
     \right.$$
For the diagonal entries, if $1\le i\le p$, 
\begin{equation*}
\begin{split}
(K_{\theta,m,p})_{ii} &= \sum_{\sigma\in S_{m,p}} \mu_{\theta,m,p}(\sigma) (b_i^{\sigma})^2 a_{ii}= a_{ii} \sum_{l=1}^p \sum_{\sigma \in S_{m,p}, \sigma(l)=i} \mu_{\theta, m, p}(\sigma) \\
&= a_{ii} \Big(\sum_{\sigma \in S_{m,p}, \sigma(i)=i} \mu_{\theta,m,p} + \sum_{l\neq i} \sum_{\sigma \in S_{m,p}, \sigma(l)=i} 
\mu_{\theta,m,p} \Big)\\
&= a_{ii} \Big( \frac{\theta}{\theta+m-1} \sum_{\sigma' \in S_{m-1,p-1}} \mu_{\theta,m-1,p-1}+ \frac{p-1}{\theta+m-1} \sum_{\sigma' \in S_{m-1,p-1}} \mu_{\theta,m-1,p-1}\Big)\\
&= \frac{\theta+p-1}{\theta+m-1} a_{ii}.
\end{split}
\end{equation*}
If $p+1\le i\le m$, 
\begin{equation*}
\begin{split}
(K_{\theta,m,p})_{ii} &= \sum_{\sigma\in S_{m,p}} \mu_{\theta,m,p}(\sigma) (b_i^{\sigma})^2 a_{ii}= a_{ii} \sum_{l=1}^p \sum_{\sigma \in S_{m,p}, \sigma(l)=i} \mu_{\theta, m, p}(\sigma) \\
&= a_{ii} \Big(\frac{p}{\theta+m-1} \sum_{\sigma' \in S_{m-1,p-1}} \mu_{\theta,m-1,p-1} \Big)\\
&= \frac{p}{\theta+m-1} a_{ii}.
\end{split}
\end{equation*}
For non-diagonal entries, if $1\le i < j \le p$, which turns out to be the most complicated case, $b_i^{\sigma} b_j^{\sigma} a_{ij}$ is non zero if $i, j \in \{\sigma(1), \ldots, \sigma(p) \}$. Thus $$
(K_{\theta,m,p})_{ij} =a_{ij} \sum_{s,t \in [p], s\neq t} \sum_{\sigma \in S_{m,p},\atop \sigma(s)=i, \sigma(t)=j} \mu_{\theta,m,p} (\sigma).
$$ 
We divide the previous sum into five parts: 
\begin{enumerate}
\item  If $\sigma(i)=i$ and $\sigma(j)=j$ we ``erase" $i$ and $j$ from the sets $[p]$ and $[m]$ to get a new injection $\sigma_1$ from $[p]\backslash \{i,j \}$ to $[m]\backslash \{i,j \}$ with $\#\text{cycl}(\sigma)=\#\text{cycl}(\sigma_1)+2$;

\vspace{0.2cm}
\item If $\sigma(s)=i$ for some $s\in [p]\backslash \{i,j\}$ and $\sigma(j)=j$ we ``erase" $j$ from the sets $[p]$ and $[m]$ and consider $s$ and $i$ as one number $\tilde s$. Then we get a new injection $\sigma_2 : [p]\cup {\tilde s} \backslash \{i,j,s \} \rightarrow [m]\cup {\tilde s} \backslash \{i,j,s \}$ with $\#\text{cycl}(\sigma)=\#\text{cycl}(\sigma_2) +1$; 

\vspace{0.2cm}
\item  If $\sigma(t)=j$ for some $t\in [p]\backslash \{i,j\}$ and $\sigma(i)=i$ then, similarly to case (2), by exchanging the roles of $i$ and $j$ we can get a new injection $\sigma_3$ with $\#\text{cycl}(\sigma)=\#\text{cycl}(\sigma_3) +1$;

\vspace{0.2cm}
\item  If $\sigma(s)=i$ and  $\sigma(t)=j$ with $s\neq t$ for some $s \in [p]\backslash \{i \}$ and $t \in [p] \backslash \{j\}$ then we consider $s$ and $i$ as a new number $\tilde s$ and $t$ and $j$ as a new number $\tilde t$ to get a new injection $\sigma_4 : [p]\cup {\tilde s, \tilde t} \backslash \{i,j,s,t \} \rightarrow [m]\cup {\tilde s,\tilde t} \backslash \{i,j,s,t \}$ with $\#\text{cycl}(\sigma)=\#\text{cycl}(\sigma_4)$;

\vspace{0.2cm}
\item  If $\sigma(i)=j$ and $\sigma(j)=i$ we ``erase" $i$ and $j$ to get a new injection $\sigma_5 :[p]\backslash \{i,j \} \rightarrow [m]\backslash \{i,j\}$ with $\#\text{cycl}(\sigma)=\#\text{cycl}(\sigma_5)+1$.
\end{enumerate}
\begin{equation*}
\begin{split}
(K_{\theta,m,p})_{ij} &= a_{ij} \frac{ \theta^2}{(\theta+m-1)(\theta+m-2)} \sum_{\sigma_1 \in S_{m-2,p-2}} \mu_{\theta,m-2,p-2} (\sigma_1)\\
& + \frac{a_{ij} \theta(p-2)}{(\theta+m-1)(\theta+m-2)}\sum_{\sigma_2 \in S_{m-2,p-2}} \mu_{\theta,m-2,p-2} (\sigma_2)\\ 
& +\frac{a_{ij}  \theta(p-2)}{(\theta+m-1)(\theta+m-2)}\sum_{\sigma_3 \in S_{m-2,p-2}} \mu_{\theta,m-2,p-2} (\sigma_3) \\
&+ a_{ij}\frac{(p-2)^2 +(p-2)}{(\theta+m-1)(\theta+m-2)}\sum_{\sigma_4 \in S_{m-2,p-2}} \mu_{\theta,m-2,p-2} (\sigma_4) \\
& +  \frac{a_{ij}\theta}{(\theta+m-1)(\theta+m-2)}\sum_{\sigma_5 \in S_{m-2,p-2}} \mu_{\theta,m-2,p-2} (\sigma_5)\\
&= \frac{(\theta+p-1)(\theta+p-2)}{(\theta+m-1)(\theta+m-2)} a_{ij}.
\end{split}
\end{equation*}

For $1\le i \le p <j \le m$  we only need consider two cases: $s=i$ and $s\neq i$, 
\begin{equation*}
\begin{split}
(K_{\theta,m,p})_{ij} & = a_{ij} \frac{ \theta (p-1)}{(\theta+m-1)(\theta+m-2)} \sum_{\sigma_1 \in S_{m-2,p-2}} \mu_{\theta,m-2,p-2} (\sigma_1)\\
&+ a_{ij}\frac{(p-1)^2}{(\theta+m-1)(\theta+m-2)} \sum_{\sigma_2 \in S_{m-2,p-2}} \mu_{\theta,m-2,p-2} (\sigma_2) \\
& = a_{ij} \frac{(p-1)(p+\theta-1)}{(\theta+m-1)(\theta+m-2)}.
\end{split}
\end{equation*}
For $p <i <j \le m$, $$(K_{\theta,m,p})_{ij} = a_{ij} \frac{p(p-1)}{(\theta+m-1)(\theta+m-2)}.$$
\end{proof}

Now we consider the estimate $\tilde K_{\theta,m,p}$ as in Equation \eqref{eq:ewens2}. First we analyze the case when $K$ is diagonal.
\begin{theorem}Let $D=D_m=\mathrm{diag} (d_1, \ldots,d_n, 0,\ldots, 0)$, then for $p\le n$,
$$
\tilde K_{\theta,m,p} = \mathbb{E}\Big(V_\sigma^T(V_\sigma D V_\sigma^T)^{+}V_\sigma\Big) = \frac{\theta+p-1}{\theta+m-1} D^{+} -\frac{\theta-1}{\theta+m-1} \mathrm{diag} (d_1^{-1}, \ldots, d_p^{-1}, 0, \ldots, 0),
$$
where $D^+ = \mathrm{diag} (d_1^{-1}, \ldots, d_n^{-1}, 0, \ldots, 0)$ by definition.
\end{theorem}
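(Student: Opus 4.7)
The plan is to exploit the fact that the diagonal structure of $D$ is preserved under the whole operation $V_\sigma (\cdot) V_\sigma^T$ followed by pseudoinversion followed by $V_\sigma^T (\cdot) V_\sigma$, so that the calculation reduces to a single probability that was already handled in the proof of the preceding theorem.

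First I would observe that for each fixed $\sigma \in S_{p,m}$, since $D$ is diagonal,
$$V_\sigma D V_\sigma^T = \mathrm{diag}(d_{\sigma(1)}, \ldots, d_{\sigma(p)}),$$
with the convention $d_k = 0$ for $k > p$. Its Moore--Penrose pseudoinverse is thus the diagonal matrix $\Lambda_\sigma := \mathrm{diag}(d_{\sigma(1)}^+, \ldots, d_{\sigma(p)}^+)$, where $d^+ := 1/d$ when $d \neq 0$ and $d^+ := 0$ when $d = 0$. Next, expanding $V_\sigma^T \Lambda_\sigma V_\sigma$ entrywise and using that the rows of $V_\sigma$ are distinct standard basis vectors (because $\sigma$ is an injection) shows that $V_\sigma^T \Lambda_\sigma V_\sigma$ is again a diagonal $m \times m$ matrix whose $(i,i)$-entry is
$$\bigl(V_\sigma^T \Lambda_\sigma V_\sigma\bigr)_{ii} = \sum_{l=1}^{p} d_{\sigma(l)}^{+}\, \mathbf{1}_{\sigma(l)=i} = d_i^{+}\, \mathbf{1}_{i \in \sigma([p])}.$$

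Since $d_i^+ = 0$ for $i > p$, only indices $i \in \{1, \ldots, p\}$ can contribute. Taking expectation under $\mu_{\theta,m,p}$ gives
$$(\tilde K_{\theta,m,p})_{ii} = d_i^{-1} \cdot \mathbb{P}_{\theta,m,p}\bigl(i \in \sigma([p])\bigr), \qquad 1 \le i \le p,$$
and $(\tilde K_{\theta,m,p})_{ii} = 0$ for $p < i \le m$, with all off-diagonal entries vanishing. It remains to evaluate the probability. For $i \le p$ this is exactly the sum
$$\sum_{l=1}^{p} \sum_{\sigma \in S_{p,m}\,:\,\sigma(l)=i} \mu_{\theta,m,p}(\sigma)$$
computed in the diagonal-entry case of the previous theorem. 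Splitting according to whether $l = i$ (which contributes $\theta/(\theta+m-1)$, using the "fixed point" weight under the Ewens measure and normalization $\sum_{\sigma' \in S_{p-1,m-1}}\mu_{\theta,m-1,p-1}(\sigma') = 1$) or $l \neq i$ (each of the $p-1$ such terms contributes $1/(\theta+m-1)$), the total probability is $(\theta+p-1)/(\theta+m-1)$.

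Substituting back, $\tilde K_{\theta,m,p} = \frac{\theta+p-1}{\theta+m-1}\, D^{+}$, which is exactly the stated formula once one notes that $D^+ = \mathrm{diag}(d_1^{-1}, \ldots, d_p^{-1}, 0, \ldots, 0)$, so the two displayed summands combine into $\frac{p+\theta-1}{\theta+m-1} D^{+}$. The only genuine subtlety, and the one step deserving care, is correctly handling the pseudoinverse when $\sigma(l) > p$ (so $d_{\sigma(l)} = 0$): one must use $d^+ = 0$ rather than $d^{-1}$, which is what produces the clean indicator $\mathbf{1}_{i \in \sigma([p])}$ and lets the rest of the argument recycle the previous theorem's probabilistic computation. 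Beyond that, no new combinatorics is needed.
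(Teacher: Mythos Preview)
Your proof is correct and follows essentially the same approach as the paper: both compute $V_\sigma D V_\sigma^T$ as a diagonal matrix, take the pseudoinverse entrywise, observe that $V_\sigma^T(\,\cdot\,)V_\sigma$ keeps the result diagonal with $(i,i)$-entry $d_i^+\mathbf{1}_{i\in\sigma([p])}$, and then reduce the expectation to the probability $\mathbb{P}_{\theta,m,p}(i\in\sigma([p]))$ already evaluated in the diagonal case of the preceding theorem. Your write-up is in fact tidier than the paper's, whose final displayed case distinction appears to have the two values $\frac{p}{\theta+m-1}$ and $\frac{\theta+p-1}{\theta+m-1}$ swapped; since here $n=p$, only the case $1\le i\le p$ is relevant, and your value $\frac{\theta+p-1}{\theta+m-1}$ is the one consistent with both the previous theorem and the statement being proved.
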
 

\begin{proof}
First we notice that $W_\sigma:= V_\sigma D V_\sigma^T = (\sum_{i=1}^n d_l e_{\sigma(i)}(l) e_{\sigma(j)}(l))_{1\le i,j \le p}$ is a diagonal matrix. 
For $1\le i \le p$, 
$$
(W_\sigma)_{ii} = \sum_{l=1}^n d_l (e_{\sigma(i)}(l))^2 =\left\{
     \begin{array}{lr}
       d_{\sigma(i)} \,\,\, ~\mathrm{if}~ \sigma(i) \in [n],\\
       0  \,\,\, ~\mathrm{otherwise}.
     \end{array}
     \right. 
$$
Thus 
$$
W_{\sigma}= \mathrm{diag} (d_{\sigma(1)} \mathbf{1}_{\sigma(1) \in [n]}, \ldots, d_{\sigma(p)} \mathbf{1}_{\sigma(p) \in [n]} )$$ and $$W_\sigma^+ = \mathrm{diag} \left( (d_{\sigma(1)} \mathbf{1}_{\sigma(1) \in [n]})^{+}, \ldots, (d_{\sigma(p)} \mathbf{1}_{\sigma(p) \in [n]})^+ \right).
$$
Next $V_{\sigma}^T W^+ V_{\sigma} =\sum_{l=1}^p  (d_{\sigma(l)} \mathbf{1}_{\sigma(l) \in [n]})^{+}$ is still a diagonal matrix where for $1\le i \le m$
$$
(V_{\sigma}^T W^+ V_{\sigma})_{ii} =\left\{
     \begin{array}{lr}
       (d_{\sigma(l)} \mathbf{1}_{\sigma(l) \in [n]})^{+} \,\,\, ~\mathrm{if}~ i \in \{\sigma(1), \ldots, \sigma(p) \},\\
       0 \,\,\, ~\mathrm{otherwise}.
     \end{array}
     \right. 
$$
Therefore $\tilde K_{\theta,m,p}$ is also diagonal and 
$$
(\tilde K_{\theta,m,p})_{ii} = \sum_{l=1}^p \sum_{\sigma \in S_{m,p},\atop \sigma(l) =i} \mu_{\theta,m,p}(\sigma)(d_i \mathbf{1}_{i \in [n]})^+. 
$$     
For $1\le i \le n$, $$(\tilde K_{\theta,m,p})_{ii} =d_i^{-1} \sum_{\sigma \in S_{m,p},\atop \sigma(l) =i} \mu_{\theta,m,p}(\sigma) = \left\{
     \begin{array}{lr}
       d_i^{-1} \frac{p}{\theta+m-1}  , ~\mathrm{if}~ 1\le i \le p,\\
       \\
       d_i^{-1} \frac{\theta+p-1}{\theta+m-1}  , ~\mathrm{if}~ p+1\le i \le n.
     \end{array}
     \right. $$
For $n+1 \le i \le m$,  $(\tilde K_{\theta,m,p})_{ii}=0$.    
\end{proof}

Obtaining a close form expression for Equation \eqref{eq:ewens2} in the general case seems to be much more challenging. However, we are able to obtain an inductive formula with the help of a result of Kurmayya and Sivakumar's result \cite{KS}.

\begin{theorem}[Theorem 3.2, \cite{KS}] \label{inverse}
Let $M=[A~~ a] \in \mathbb{R}^{m \times n}$ be a block matrix, with $A \in \mathbb{C}^{m\times (n-1)}$ and $a\in \mathbb{C}^{m}$ being written as a column vector. Let $B = M^* M$ and $s= \| a\|^2 -a^* A A^+ a$. Then if $s \neq 0$ 
$$
B^+ = \left( \begin{matrix} (A A^*)^+ + s^{-1} (A^+ a) (A^+ a)^* & -s^{-1} (A^+a) \\ -s^{-1}(A^+ a)^* & s^{-1} \end{matrix} \right),
$$ 
and if $s = 0$ ,
$$
B^+ = \left( \begin{matrix} (A A^*)^+ + \|b \|^2 (A^+ a) (A^+ a)^* - (A^+ a) (A^+ b)^* -(A^+ b) (A^+ a)^*& -\|b\|^2 A^+ a +A^+ b \\ -\|b\|^2(A^+a)^*+(A^+ b)^* & \|b\|^2 \end{matrix} \right),$$ 
where 
$$
b=(A^*)^+ (I+ A^+a (A^+ a)^*)^{-1} A^+ a.
$$
\end{theorem}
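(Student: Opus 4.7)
The plan is to establish this pseudoinverse formula via Greville's classical bordering identity, combined with the standard relation $(M^*M)^+ = M^+(M^+)^*$. Greville's formula states that for $M = [A\;\;a]$,
\[
M^+ = \begin{pmatrix} A^+ - d\,b^* \\ b^* \end{pmatrix},
\]
where $d := A^+ a$, and $b = s^{-1}(I - AA^+)a$ when $s \neq 0$ while $b = (A^+)^* d /(1+\|d\|^2)$ when $s = 0$. That the latter expression agrees with the $(A^*)^+(I+A^+a(A^+a)^*)^{-1}A^+a$ appearing in the theorem follows from a direct Sherman--Morrison step: $(I+dd^*)^{-1}d = d/(1+\|d\|^2)$. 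Setting $X := A^+ - d\,b^*$ for the top block of $M^+$, the product $B^+ = M^+(M^+)^*$ takes the $2\times 2$ block form
\[
B^+ = \begin{pmatrix} XX^* & Xb \\ (Xb)^* & \|b\|^2 \end{pmatrix},
\]
so the whole proof reduces to simplifying $XX^*$ and $Xb$ in each case.

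For the case $s \neq 0$, I would expand
\[
XX^* = A^+(A^+)^* - A^+b\,d^* - d(A^+b)^* + \|b\|^2 dd^*
\]
and note the key cancellation $A^+b = s^{-1}A^+(I - AA^+)a = 0$, which follows from $A^+AA^+ = A^+$. Combined with the identity $A^+(A^+)^* = (A^*A)^+$ and a short computation $\|b\|^2 = s^{-2}\,a^*(I - AA^+)a = s^{-1}$, this produces the stated top-left block. The same cancellation gives $Xb = -\|b\|^2 d = -s^{-1}A^+ a$, completing the case.

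For $s = 0$, the vector $a$ lies in the range of $A$, so $A^+b$ no longer vanishes and must be retained. The same formal expansion of $XX^*$ now keeps all four rank-one terms and they match exactly the quantities $\|b\|^2(A^+a)(A^+a)^*$, $(A^+a)(A^+b)^*$ and $(A^+b)(A^+a)^*$ in the theorem; similarly $Xb = A^+b - \|b\|^2 A^+a$ supplies the off-diagonal block, and $\|b\|^2$ the corner.

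The main obstacle is the bookkeeping in the $s=0$ case: without the cancellation $A^+b = 0$ the formula carries three additional rank-one corrections, and one must correctly identify the Greville $b$ with the expression in the statement via the Sherman--Morrison reduction above. As a sanity check I would verify the four Moore--Penrose conditions for the candidate $B^+$; this is automatic from $B^+ = M^+(M^+)^*$ together with the identity $(M^*M)^+ = M^+(M^*)^+$, but is worth confirming directly, especially in the rank-deficient case where these identities still hold but require a brief justification based on the standard Penrose conditions applied to $M$.
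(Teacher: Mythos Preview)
The paper does not supply its own proof of this statement: it is quoted verbatim as Theorem~3.2 of Kurmayya and Sivakumar \cite{KS} and used as a black box to derive the recursive formula for $\tilde K_{\theta,m,p}$. There is therefore nothing in the paper to compare your argument against.

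That said, your route via Greville's bordering identity together with $(M^*M)^+ = M^+(M^+)^*$ is a clean and correct way to obtain the formula. The cancellation $A^+b = s^{-1}A^+(I-AA^+)a = 0$ when $s\neq 0$, the computation $\|b\|^2 = s^{-1}$, and the Sherman--Morrison reduction identifying the Greville vector with the $b$ of the statement in the $s=0$ case are all valid. One small point worth flagging: the top-left block should read $(A^*A)^+$ rather than $(AA^*)^+$, since $B=M^*M$ is $n\times n$ and its leading $(n-1)\times(n-1)$ block must match the size of $A^*A$; your identity $A^+(A^+)^* = (A^*A)^+$ already gives the correct object, so this is a typo in the quoted statement rather than an error in your argument.
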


For a non--negative definite matrix $K$, one can decompose 
$$
K=UDU^*=\left( \begin{matrix} u_1\\ u_2\\ \vdots \\ u_m \end{matrix}\right) \left( \begin{matrix} d_1 & & & & \\ & d_1 & & & \\ & & \ddots & &\\ & & & &d_m \end{matrix} \right) \left( \begin{matrix} u_1^*&u_2^*& \ldots&u_m^* \end{matrix}\right),
$$
where $U$ is a unitary matrix. Then
\begin{equation*}
\begin{split}
W_\sigma &=V_\sigma K V_\sigma^T = \left( \begin{matrix} u_{\sigma(1)}\\ u_{\sigma(2)}\\ \vdots \\ u_{\sigma(p)} \end{matrix}\right) \left( \begin{matrix} d_1 & & & & \\ & d_1 & & & \\ & & \ddots & &\\ & & & &d_m \end{matrix} \right) \left( \begin{matrix} u_{\sigma(1)}^*&u_{\sigma(2)}^*& \ldots&u_{\sigma(p)}^* \end{matrix}\right) \\
&= \left( \begin{matrix} \tilde u_{\sigma(1)}\\ \tilde u_{\sigma(2)}\\ \vdots \\ \tilde u_{\sigma(p)} \end{matrix}\right) \left( \begin{matrix} \tilde u_{\sigma(1)}^*&\tilde u_{\sigma(2)}^*& \ldots&\tilde u_{\sigma(p)}^* \end{matrix}\right) := M^* M,
\end{split}
\end{equation*}
where 
$$
\tilde u_i = (\sqrt{d_1}u_i^i, \ldots, \sqrt{d_m}u_i^m ).
$$ 
Let $M=[M_1 ~ a]$ with $M_1 =\left( \begin{matrix} \tilde u_{\sigma(1)}^*&\tilde u_{\sigma(2)}^*& \ldots&\tilde u_{\sigma(p-1)}^* \end{matrix}\right) $ and $a=\tilde u_{\sigma(p)}^*$. Let $s= \| a\|^2 -a^* M_1 M_1^+ a$ and $b=(M_1^*)^+ (I+ M_1^+a (M_1^+ a)^*)^{-1} M_1^+ a$. By Theorem \ref{inverse}, 
$$
(M^* M)^+ = \left( \begin{matrix} (M_1 M_1^*)^+ & 0 \\ 0 & 0 \end{matrix} \right) + E_\sigma
$$ 
where the matrix $E_\sigma =$
\begin{equation}
 \left\{
     \begin{array}{lr}
       \left( \begin{matrix} s^{-1} (M_1^+ a) (M_1^+ a)^* & -s^{-1} (M_1^+a) \\ -s^{-1}(M_1^+ a)^* & s^{-1} \end{matrix} \right) \,\, ~\mathrm{if}~ s\neq 0,\\
       \\
       \left( \begin{matrix}   \|b \|^2 (M_1^+ a) (M_1^+ a)^* - (M_1^+ a) (M_1^+ b)^* -(M_1^+ b) (M_1^+ a)^*& -\|b\|^2 M_1^+ a +M_1^+ b \\ -\|b\|^2(A^+a)^*+(A^+ b)^* & \|b\|^2 \end{matrix} \right) \,\, ~\mathrm{if}~ s=0.
     \end{array}
     \right.
\end{equation}

Therefore,
\begin{equation}
\tilde K_{\theta,m,p} = \mathbb{E} (V_\sigma^T \left( \begin{matrix} (M_1 M_1^*)^+ & 0 \\ 0 & 0 \end{matrix} \right) V_\sigma) + \mathbb{E} (V_\sigma^T E_\sigma V_\sigma) = \tilde K_{\theta,m,p-1} + \mathbb{E} (V_\sigma^T E_\sigma V_\sigma).
\end{equation}

\section{Performance and Simulations}\label{sec:simulation}

In this section, we study the performance of our estimators and we compare them with other traditional methods. We focus on two types of true covariance matrix $\Sigma$ of size $m\times m$. In the first example,  $\Sigma=A_{\alpha}$ is an $m\times m$ Toeplitz covariance matrix with entries $\Sigma_{ij}=\alpha^{|i-j|}$. Here $0<\alpha<1$. Note that $\det(A_{\alpha})=(1-\alpha^2)^{m-1}$ and thus $A_{\alpha}$ is positive semi-definite if and only if $|\alpha|\le 1$. We call $A_{\alpha}$ the \emph{power Toeplitz matrix}. We observe that $A_\alpha$ is sparse in the sense that its entries decay in an exponential rate as they move away from the diagonal. In our experiment, we take $\alpha=0.5$. 

In the other example, we take $\Sigma=B_H$ to be the \emph{long-range dependence matrix} of the form
\begin{align*}
\Sigma_{ij} = \frac{1}{2} [ (|i-j| +1)^{2H} - 2|i-j|^{2H} + (|i-j|-1)^{2H} )]
\end{align*}
with $H\in [0.5,1]$. This kind of covariance matrix presents a process exhibiting long-range dependence, for example, the
increment process of fractional Brownian motion (see \cite{BL08a} for instance).  Contrary to the power Toepltiz matrix $A_\alpha$, the off-diagonal entries of $B_H$ (even far away from the diagonal) show long-range dependence and have non-negligible effort to the whole matrix. We choose $H=0.9$ in the simulation.


\subsection{Asymptotic behavior of the mean conjugate estimator under Ewens measure}\label{sec:asymptotic}

In this subsection, we study the asymptotic behavior for some covariance matrices using the mean conjugate estimator under Ewens measure. For an $m\times m$ symmetric matrix $K$, denote the eigenvalues $\lambda_1(K)\le \ldots \le \lambda_m(K)$. The simplest statistic of the eigenvalues is the \emph{empirical spectral measure}
$$\mu_{m}^K=\frac{1}{m} \sum_{j=1}^m \delta_{\lambda_j(K)}.$$
That is, for any set $E \subset \mathbb{R}$, $\mu_{m}(E)$ counts the proportion of eigenvalues of $K$ that lie in $E$. 

We show that if the diagonal entries of $K$ are all equal to 1 and the off-diagonal entries are not too big, then by choosing $\theta$ proportional to the dimension in the Ewens measure, $K_{\theta}=\E(M_{\sigma} K M_{\sigma}^*)$ is asymptotically equivalent to a convex combination of $K$ and the identity matrix $I$. 

For two positive functions $f(n), g(n)$, denote $f(n)=o(g(n))$  if $f(n)/g(n)\rightarrow 0$ as $n\rightarrow \infty$ and $f(n)=O(g(n))$ if $f(n)\le C g(n)$ for some $C>0$ for $n$ sufficiently large. 
\begin{theorem}\label{thm:asym}
For an $m\times m$ symmetric matrix $K=(a_{ij})$, assume $a_{ii}=1$ for all $1\le i \le m$,
\begin{align}\label{sparse}
\sum_{i\neq j} a_{ij}^2=O(m), \quad |\sum_{l\neq k} a_{lk}| = o(m^{3/2}) \quad \text{and} \quad \sum_{i\neq j}\big[ \sum_{k\neq i,j} (a_{ik}+a_{kj}) \big]^2 =  o(m^3).
\end{align}
Then for the mean conjugate estimator $K_\theta$ as in \eqref{eq:K} with $\theta = \beta m$, we have
$$\lim_{m\rightarrow \infty} \mu_m^{K_\theta}=\lim_{m\rightarrow \infty} \mu_m^{\frac{\beta^2}{(\beta+1)^2} K+(1-\frac{\beta^2}{(\beta+1)^2})I_m}.$$
\end{theorem}

\begin{proof} By Lemma 2.3 in \cite{bai99} the Levy metric of the empirical distributions of two $m\times m$ Hermitian matrix $A,B$ satisfies 
$$
L(\mu_m^A,\mu_m^B)\le \Big(\frac{1}{m}\mathrm{Tr}(A-B)(A-B)^*\Big)^{1/3}.
$$
It is known (see Theorem 6, Section 4.3, \cite{Galambos}) that the distribution functions $\mu_m $ converges weakly to $\mu$ if and only if the Levy metric $L(\mu_m,\mu)\rightarrow 0$.
Let $$E=K_\theta-\big( I_m+\frac{\beta^2}{(\beta+1)^2} (K-I_m) \big).$$

Thus it is enough to check that $\frac{1}{m} \mathrm{Tr} (E E^T) =\frac{1}{m}\sum_{i,j}E_{ij}^2\rightarrow 0$ as $m\rightarrow \infty$.

Note that $a_{ii}=1$ and $\theta=\beta m$. Applying Theorem \ref{permutation}, we obtain $E_{ii}=0$ and for $i\neq j$,
\begin{align*}
E_{ij}&=(K_\theta)_{ij} - \frac{\beta^2}{(\beta+1)^2} a_{ij}\\
&=\Big[\frac{\beta^2 m^2- \beta m -2}{(\beta m+m-2)(\beta m +m-1)} -\frac{\beta^2}{(\beta+1)^2}\Big] a_{ij} + \frac{\beta m-1}{(\beta m+m-2)(\beta m +m-1)} \sum_{k\neq i, j} (a_{ik}+a_{kj}) \\
&\quad\quad\quad+ \frac{1}{(\beta m+m-2)(\beta m +m-1)} \sum_{l\neq k} a_{lk} .
\end{align*}
Therefore, using the basic inequality $(a+b+c)^2 \le 3a^2+3b^2+3c^2$, we have
\begin{align*}
\frac{1}{m} \mathrm{Tr} (E E^T)&=\frac{1}{m}\sum_{i\neq j}E_{ij}^2\\
&\le \frac{3}{m}\left[\frac{\beta^2 m^2 - \beta m-2}{(\beta m+m-2)(\beta m +m-1)} -\frac{\beta^2}{(\beta+1)^2}\right]^2 \sum_{i\neq j} a_{ij}^2+ \frac{3}{m}\frac{\beta^2 m^2}{(\beta m+m-2)^4}\sum_{i\neq j}\big[ \sum_{k\neq i, j} (a_{ik}+a_{kj}) \big]^2\\
&\quad\quad + \frac{3}{m}\frac{m^2}{(\beta m+m-2)^4}\big(\sum_{l\neq k} a_{lk} \big)^2\\
&=o\Big(\frac{\sum_{i\neq j} a_{ij}^2}{m} \Big) + O\Big(\frac{1}{m^3} \sum_{i\neq j}\big[ \sum_{k\neq i,j} (a_{ik}+a_{kj}) \big]^2 \Big) + O\Big(\frac{1}{m^3} \big(\sum_{l\neq k} a_{lk} \big)^2 \Big)=o(1)
\end{align*}
by the assumption. This completes the proof.
\end{proof}

\begin{remark}
Theorem \ref{thm:asym} asserts if $K$ possesses some level of sparsity in terms of \eqref{sparse}, then asymptotically $K_{\theta}$ behaves like a linear convex combination of $I_m$ and the sample covariance matrix $K$. We only show the convergence of the overall behavior of the eigenvalues. Indeed, if we impose stronger conditions on the entries of $K$, i.e.  $$\sum_{i\neq j} a_{ij}^2=O(1), \quad |\sum_{l\neq k} a_{lk}| = o(m^{1/2}) \quad \text{and} \quad \sum_{i\neq j}\big[ \sum_{k\neq i,j} (a_{ik}+a_{kj}) \big]^2 =  o(m^2),$$
then the matrix $E$ in the proof of Theorem \ref{thm:asym} satisfies $\|E\|_F = o(1)$. By Weyl's inequality, one gets the individual eigenvalue of $K_\theta$ is close to that of $\frac{\beta^2}{(\beta+1)^2} K+(1-\frac{\beta^2}{(\beta+1)^2})I_m$. Similarly, by imposing extra conditions on the eigenvalues of $K$, one can obtain results on the perturbation of eigenvectors using the classical Davis-Kahan theorem (see for instance \cite[Section V]{SS90}). However, we found these imposed conditions are rather restrictive. It is an intriguing question to investigate the optimal conditions to guarantee the closeness of $K_\theta$ and $\frac{\beta^2}{(\beta+1)^2} K+(1-\frac{\beta^2}{(\beta+1)^2})I_m$.
\end{remark}

\begin{remark}\label{rem:LW}
In \cite{LW04}, Ledoit and Wolf introduce the \emph{linear shrinkage estimator} or the \emph{LW estimator} $$K_{LW}=\rho_1 I_m + \rho_2 K$$ to estimate the true covariance matrix $\Sigma$. They provide the optimal parameter $\rho_1^*$ and $\rho_2^*$ to minimize the error $\E \| K_{LW}-\Sigma\|_F$ in the space of $\{ \rho_1 I_m + \rho_2 K  :  \rho_1, \rho_1 \text{ non-random}\}$. The values of $\rho_1^*$ and $\rho_2^*$ actually depend on the true covariance matrix $\Sigma$. Specially, if $\Sigma_{ii}=1$ for all $i$, then $\rho_1^*+ \rho_2^*=1$ and $K_{LW}$ is the linear convex combination of $I_m$ and $K$. They suggest consistent estimators $\hat{\rho}_1$ and $\hat\rho_2$ (see Section 3.2 in \cite{LW04}) without prior knowledge of $\Sigma$. We will use the LW estimator $K_{LW}$ with parameters $\hat{\rho}_1$ and $\hat\rho_2$ for performance comparison. 
\end{remark}

\begin{remark} \label{rem:Linear}For the power Toeplitz matrix $A_{\alpha}=(\alpha^{|i-j|})_{1\le i,j\le m}$. Assume $0<\alpha<1$, it is easy to verify that $A_{\alpha}$ satisfies \eqref{sparse} and thus the conclusion of Theorem \ref{thm:asym} holds for $A_\alpha$.
Next let $K=(a_{ij})_{1\le i,j\le m}$ be the sample covariance matrix generated using Gaussian random variables. If the off-diagonal entries are not prominent (with high probability) in the sense of \eqref{sparse}, then the effect of the Ewens estimator with parameter $\theta=\beta m$ is asymptotically the same as the linear shrinkage estimator. Set $\beta=5$ and denote $\rho= \frac{\beta^2}{(\beta+1)^2}$. In Figure \ref{fig:compare}, we plot the difference $$\| K_\theta - \big(\rho I_m + (1-\rho) K \big)\|_{NF}$$ for $m=40, 80, 120, 160, 200$ and $n=m/2$, averaged over 50 repetitions. The blue line corresponds to the power Toeplitz matrix and the red dashed line is for the long-range dependence matrix. If the true covariance matrix $\Sigma$ is the power Toeplitz matrix, then the difference between $K_\theta$ and $\rho I_m + (1-\rho) K$ under the normalized Frobenius norm is getting smaller as $m,n$ getting larger.  However, if $\Sigma$ is the long-range dependence matrix, the difference between the Ewens estimator and the linear shrinkage estimator is getting bigger with the matrix size. This suggests the Ewens estimator has rather different behavior from the linear shrinkage estimator for the long-range dependence matrix.
\end{remark}
\begin{figure}[htbp]
\centering
\caption{Difference between the Ewens and linear shrinkage estimators for $\Sigma=A_\alpha$ (the blue diamonds) and  $\Sigma=B_H$ (the red triangles).}
\includegraphics[width=0.5\linewidth]{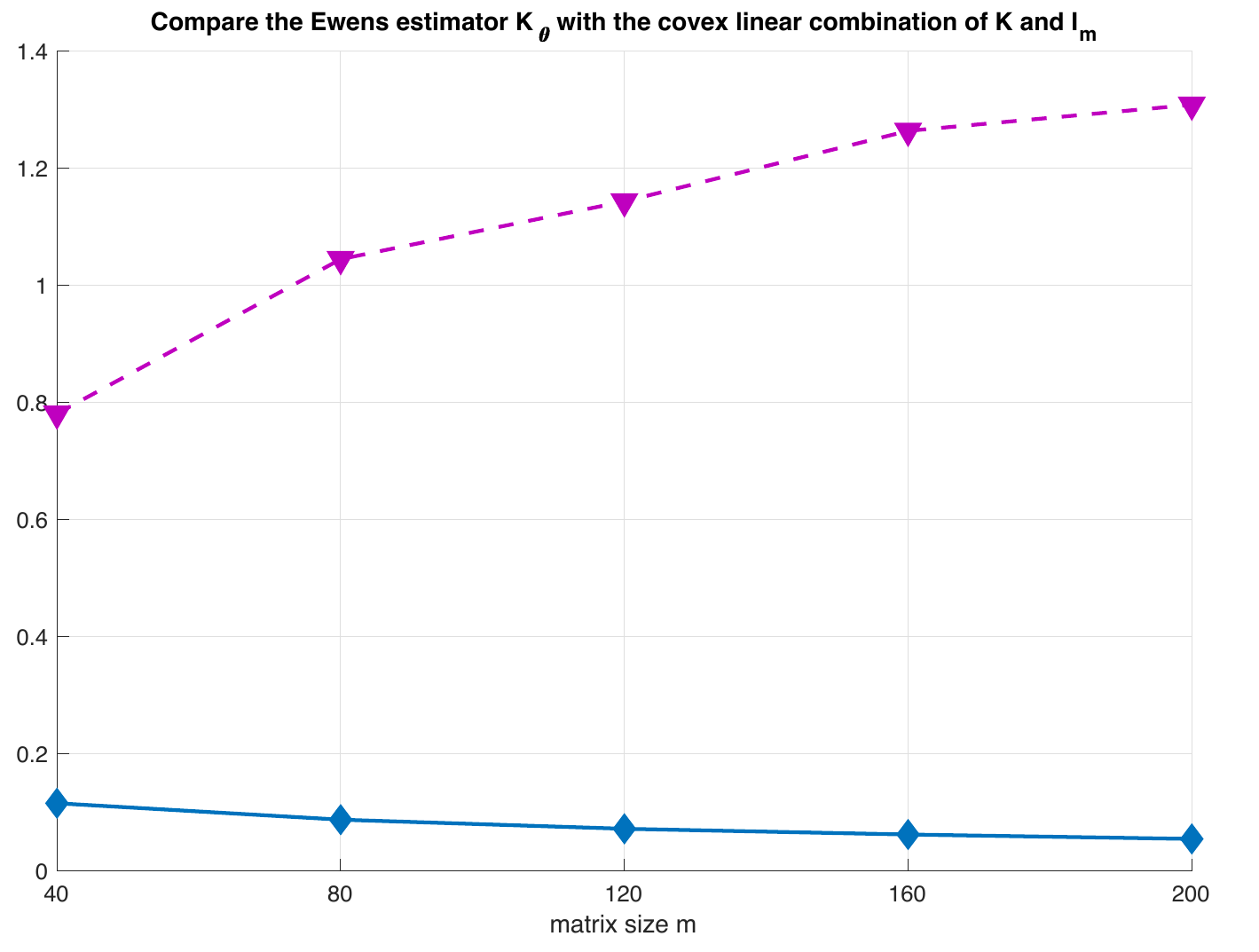}
\label{fig:compare}
\end{figure}

\subsection{Simulation study: finite sample}

\par In this subsection, we present some simulations to test the performance of our estimators. Let the random vector $X=(X^1,\ldots, X^m)^T$ have multivariate normal distribution  $N(0,\Sigma)$. Now we have $n$ measurements $(x_1, \ldots, x_n)$ where $x_i$'s are independent copies of $X$. Let $M=(x_1,\ldots,x_n)$ and form the sample covariance matrix $K=M M^T/n$. Assume $n<m$, we want to recover $\Sigma$ to the best of our knowledge. 

For brevity, we call the mean conjugate estimator under Ewens measure the  \emph{Ewens estimator}, and the linear shrinkage estimator by Ledoit and Wolf \cite{LW04} (see Remark \ref{rem:LW} above) the \emph{LW estimator}. We will compare the performance of the estimators $K_{LW}$, $\mathrm{invcov}_p(K)$ and $K_\theta=\mathbb{E}(M_\sigma K M^*_\sigma)$ as well as the sample covariance matrix $K$ itself. We will consider the error function $$\|K-\Sigma\|_{NF} = \big(\frac{1}{m} \sum_{i,j=1}^m (K_{ij} - \Sigma_{ij})^2 \big)^{1/2}$$ in terms of the \emph{normalized Frobenius norm} for an estimator $K$  of  $\Sigma$ for performance comparison. 

\textbf{\textit{Choosing the parameter $\theta$ for Ewens estimator.}} We first suggest how to choose the parameter $\theta$ for the Ewens estimator $K_\theta$. Given the sample covariance matrix $K$, the explicit formula of $K_\theta$ is provided in Theorem \ref{permutation}. We compute the formula of $\E \| K_\theta - \Sigma\|^2_{NF}$ in \eqref{eq:explicit} in Appendix \ref{appendix:formula}, which is denoted by $\mathcal{G}_\Sigma (\theta)$ for brevity. Note that  $\mathcal{G}_\Sigma (\theta)$ in \eqref{eq:explicit} is a rational function of the form $$\mathcal{G}_\Sigma (\theta)=\frac{a_4 \theta^4 + a_3 \theta^3 + a_2 \theta^2 + a_1 \theta + a_0}{(\theta+m-1)^2 (\theta+m-2)^2},$$ where the coefficients $a_i$'s depend on $m,n$ and the matrix $\Sigma$. An intuitive way to choose $\theta$ is to set 
$$\theta_0 = \mathrm{argmin}_{\theta>0} \mathcal{G}_\Sigma(\theta),$$
which is the best choice under the expected quadratic normalized Frobenius loss function. We call this $\theta_0$ the \emph{oracle} parameter.  If one has access to $\Sigma$ (or a few quantities of $\Sigma$ appearing in the formula \eqref{eq:explicit}), then $\theta_0$ is obtained by minimizing a rational function given $m$ and $n$, 
and we simply take $\theta = \theta_0$ in the Ewens estimator. However, in application, it is rare that any information of $\Sigma$ is known beforehand and only the sample covariance matrix $K$ is available. To choose $\theta$, we suggest the following method. 

Since the coefficients $a_i$'s in $\mathcal{G}_\Sigma (\theta)$ depend smoothly on $m,n$ and the matrix $\Sigma$, a small perturbation of $a_i$'s only leads to a small perturbation of the minimum value of $\mathcal{G}_\Sigma (\theta)$. Given the sample covariance matrix $K$, we replace $\Sigma$ in the expression of $\mathcal{G}_\Sigma (\theta)$ with $K$ and choose the parameter
\begin{align}\label{eq:hattheta}
\hat\theta= \mathrm{argmin}_{\theta>0} \mathcal{G}_K(\theta).
\end{align}
 We estimate the true covariance matrix $\Sigma$ using the Ewens estimator $K_{\hat\theta}$.

In Figure \ref{fig:theta1}, we plot the graphs of $\sqrt{\mathcal{G}_\Sigma (\theta)}$ as a function of $\theta>0$ for given pairs of $m,n$, for the power Toeplitz matrix and long-range dependence matrix respectively. In all plots, we can see that $\sqrt{\mathcal{G}_\Sigma (\theta)}$  achieves the unique minimum at an oracle value $\theta_0>0$. 

\begin{figure}[htbp]
\centering
\caption{Plots of $\sqrt{\mathcal{G}_\Sigma (\theta)}$ for $\Sigma=A_\alpha$ and  $\Sigma=B_H$.}
\includegraphics[width=0.8\linewidth]{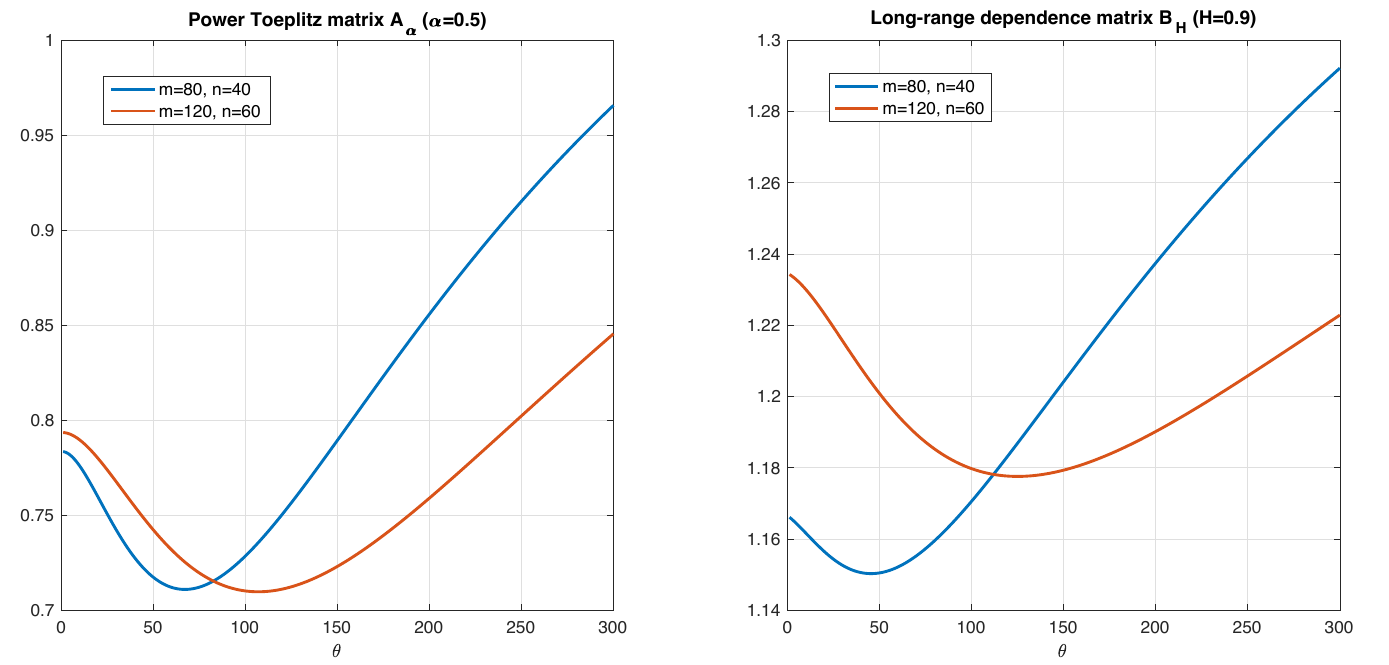}
\label{fig:theta1}
\end{figure}

In Table \ref{tab:parameter-power} and Table \ref{tab:parameter-long}, we numerically compute the oracle parameter $\theta_0$ and its corresponding loss value $( \E \| K_{\theta_0} - \Sigma \|_{NF}^2)^{1/2} = \sqrt {\mathcal{G}_\Sigma (\theta_0)}$. We also find the estimated $\hat\theta$ and its loss value $\|K_{\hat\theta}-\Sigma\|_{NF}$, as well as the loss value $\| K - \Sigma \|_{NF}$ of using the sample covariance matrix $K$ directly. These three quantities are averaged over 50 repetitions. In both tables, we note that both $\theta_0$ and $\hat\theta$ increase with the matrix size $m$ and decrease with the ratio $n/m$. However, our suggested $\hat\theta$ is quite far from the oracle $\theta_0$. This happens possibly because the coefficients $a_i$'s are perturbed by a large value when we replace $\Sigma$ with $K$. It is not clear to us yet how to select a better parameter $\theta$. Comparing Table \ref{tab:parameter-power} with Table \ref{tab:parameter-long}, we see that for the long-range dependence matrix, $\|K_{\hat\theta}-\Sigma\|_{NF}$ differs very little from $\sqrt{\mathcal{G}_\Sigma(\theta_0)}$, even though $\hat\theta$ is not a good approximation of $\theta_0$. In all cases, directly using the sample covariance matrix $K$ provides the worst performance. 

\begin{table}[htbp]
\caption{Power toepltiz matrix $\Sigma=A_\alpha \ (\alpha=0.5)$:  oracle and estimated $\theta$ and their corresponding loss values and loss of the sample covariance matrix.}
\label{tab:parameter-power}
\centering
\begin{tabular}{ l || c | c | c | c| c}
$n=m/2$& $\theta_0$ & $\sqrt{\mathcal{G}_\Sigma(\theta_0)}$ & $\hat\theta$ & $\|K_{\hat\theta}-\Sigma\|_{NF}$ & $\| K - \Sigma \|_{NF}$ \\ \hline
$m=40,n=20$ & 27.47 & 0.7145 & 106.01 & 0.8929 & 1.4344 \\
$m=80,n=40$ & 67.11 & 0.7109 & 226.27 & 0.8908 & 1.4296\\
$m=120,n=60$ & 106.99 & 0.7097 &  350.02 & 0.8857 & 1.4240\\
$m=160,n=80$ & 146.93 & 0.7091 & 472.59 &0.8836  &  1.4206 \\ \hline\hline
$n=m/4$& $\theta_0$ & $\sqrt{\mathcal{G}_\Sigma(\theta_0)}$ & $\hat\theta$ & $\|K_{\hat\theta}-\Sigma\|_{NF}$ & $\| K - \Sigma \|_{NF}$ \\ \hline
$m=40,n=10$ & 12.36 & 0.7661 & 88.95 & 1.1517 & 2.0448\\
$m=80,n=20$ & 36.52 & 0.7602 & 199.56 & 1.1473 & 2.0235\\
$m=120,n=30$ & 60.78 & 0.7586 & 308.10 & 1.1409 & 2.0081\\
$m=160,n=40$ &  85.06 & 0.7579 & 418.75 & 1.1416 & 2.0098 \\ \hline
\end{tabular}
\end{table}

\begin{table}[htbp]
\caption{Long-range dependence matrix $\Sigma=B_H\ (H=0.9)$: oracle and estimated $\theta$ and their corresponding loss values and loss of the sample covariance matrix.}
\label{tab:parameter-long}
\centering
\begin{tabular}{ l || c | c | c | c| c}
$n=m/2$& $\theta_0$ & $\sqrt{\mathcal{G}_\Sigma(\theta_0)}$ & $\hat\theta$ & $\|K_{\hat\theta}-\Sigma\|_{NF}$ & $\| K - \Sigma \|_{NF}$ \\ \hline
$m=40,n=20$ & 4.30 & 1.1263 & 73.83 & 1.1696 & 1.6254 \\
$m=80,n=40$ & 45.30 & 1.1503 & 195.66 & 1.1829 & 1.5107\\
$m=120,n=60$ & 124.86 & 1.1776 & 325.09 & 1.1814 & 1.5194\\
$m=160,n=80$ & 228.00 &1.1978 &512.75 & 1.2074 & 1.4825\\ \hline\hline
$n=m/4$& $\theta_0$ & $\sqrt{\mathcal{G}_\Sigma(\theta_0)}$ & $\hat\theta$ & $\|K_{\hat\theta}-\Sigma\|_{NF}$ & $\| K - \Sigma \|_{NF}$ \\ \hline
$m=40,n=10$ & 1.88 & 1.4787 & 80.60 & 1.5858 & 2.1031\\
$m=80,n=20$ & 5.51 & 1.4322 & 152.59 & 1.5186 & 2.1461\\
$m=120,n=30$ & 23.49 & 1.4504 & 261.23 & 1.5407 & 2.0868\\
$m=160,n=40$ & 69.52 & 1.4782 &  367.96 & 1.5396 & 2.0972  \\ \hline
\end{tabular}
\end{table}

\textbf{\textit{Performance comparision.}} We compare the performance of the Ewens estimator, LW estimator, the $\mathrm{Invcov}_p$ estimator and the sample covariance matrix, for both models: power Toeplitz matrix $A_\alpha \ (\alpha=0.5)$ and long-range dependence matrix $B_H \ (H=0.8)$.

For the $\mathrm{Invcov}_p$ estimator, we approximate the true covariance matrix $\Sigma$ by $(p/m)\mathrm{invcov}_p(K)^{-1}$ and consider the loss function $$\|(p/m)\mathrm{invcov}_p(K)^{-1}-\Sigma\|_{NF}.$$ 
Due to the complicated expression of the $\mathrm{Invcov}_p$ operator, it is hard to suggest how to turn the parameter $p$. In Figure \ref{fig:parap}, we plot the graphs of $\|(p/m)\mathrm{invcov}_p(K)^{-1}-\Sigma\|_{NF}$ for all values of $5\le p \le n$ for given pairs of $m,n$. For the power Toeplitz matrix, the optimum values of $p$ are approximately $p=8$ for $m=40,n=20$, $p=13$ for $m=80, n=40$, $p=18$ for $m=120,n=60$ and $p=26$ for $m=160, n=80$. For the long-range dependence matrix, the optimum values of $p$ happen at its largest possible value $n$. We take these optimum values $p$ in later comparison. Although it does not seem a fair game for other estimators, we will see that the $\mathrm{Invcov}_p$ estimator is never the best estimator, even with the optimum parameter $p$. 

\begin{figure}[htbp]
\centering
\caption{Plots of $\|(p/m)\mathrm{invcov}_p(K)^{-1}-\Sigma\|_{NF}$ for  $\Sigma=A_\alpha$ and  $\Sigma=B_H$.}
\includegraphics[width=0.8\linewidth]{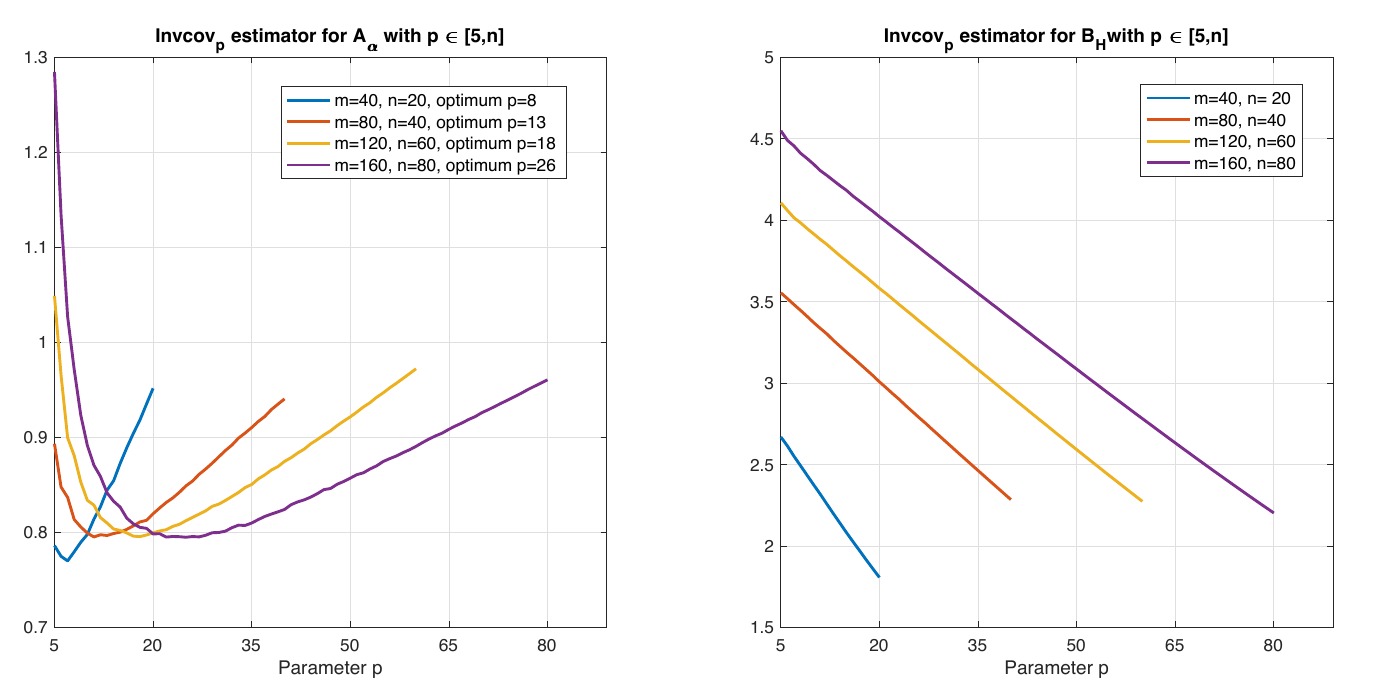}
\label{fig:parap}
\end{figure}

In Figure \ref{fig:allestimator}, we compare the performance of the estimators. We plot the loss function values $$\|\mathrm{Estimator} - \Sigma\|_{NF}$$ for $m=40, 80, 120, 160$ and $n=m/2$, averaged over 50 repetitions, for $\Sigma$ the power Toeplitz matrix and the long-range dependence matrix.

\begin{figure}[htbp]
\centering
\caption{Compare different estimators for  $\Sigma=A_\alpha$ and  $\Sigma=B_H$.}
\includegraphics[width=0.8\linewidth]{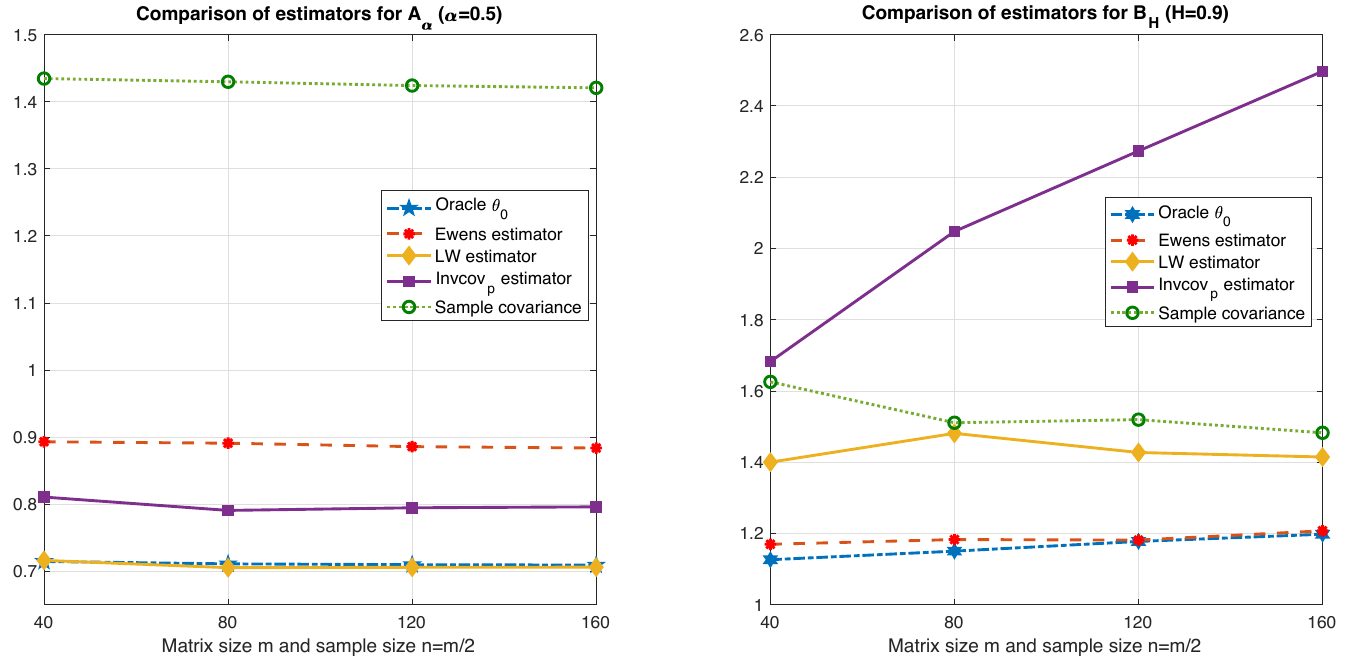}
\label{fig:allestimator}
\end{figure}

For the power Toeplitz matrix (left figure in Figure \ref{fig:allestimator}), we observe that the LW estimator (yellow line) has the best performance and for the oracle $\theta_0$ (red dashed line), the Ewens estimator has almost the identical performance. This is in accordance with Theorem \ref{thm:asym} (see also Remark \ref{rem:Linear}), that is, the Ewens estimator is asymptotically equivalent to the linear shrinkage estimator $\rho I_m + (1-\rho) K$. In our finite sample study, we further observe that the Ewens estimator with oracle $\theta_0$ performs roughly the same as the linear shrinkage estimator with the best $\rho$ which is provided in the LW estimator. However, our suggested parameter $\hat\theta$ does not seem a good approximation. The $\mathrm{invcov}_p$ (purple line) with optimum $p$ outperforms the Ewens estimator with $\hat\theta$, but is not comparable with the LW estimator. Directly using the sample covariance matrix $K$ (green dotted line) provides the worst approximation. Nevertheless, when $\Sigma$ is the power Toeplitz matrix and possesses some level of sparsity, the LW estimator is the best choice. By providing a better parameter $\hat\theta$, the Ewens estimator might be comparable with the LW estimator.

For the long-range dependence matrix (right figure in Figure \ref{fig:allestimator}), we see that the Ewens estimator (for both oracle $\theta_0$ and estimated $\hat\theta$) outperforms the other estimators. Actually, the Ewens estimator $K_{\hat\theta}$ performs almost as good as the oracle $K_{\theta_0}$. The LW estimator is only slightly better than using the sample covariance matrix directly. The $\mathrm{invcov}_p$ estimator (even with optimum $p$) always gives the largest errors and is not a good estimator for the long-range dependence matrix. 



\subsection{Comments} The simulations suggest that for the true covariance matrix with power decay Toeplitz structure, the Ewens estimator with the oracle parameter is asymptotically as good as the LW estimator.  At present, we do not have a satisfying algorithm for choosing the parameter $\theta$ very close to the oracle value. For the current suggested parameter $\hat\theta$, the LW estimator outperforms the Ewens estimator. However, for the true covariance matrix that has long-range dependence structure, the Ewens estimator always performs better than all other estimators considered. Even our suggested parameter $\hat\theta$ is not an accurate approximation to the oracle parameter, it has little influence on the performance. Provided a more accurate algorithm for choosing the parameter $\theta$, the Ewens estimator seems a better choice than the LW estimator since it is less sensitive to the sparsity of the true covariance matrix. There are still many questions to be answered: How does the operator $K_\theta$ change the eigenvalues and eigenvectors of the original matrix $K$? Is there a better way to select the parameter for the Ewens estimator, using the samples? Is it possible to analyze the performance of the Ewens estimator under other loss functions?  A more comprehensive understanding on  the Ewens estimator $K_\theta$ will shed lights on analyzing the performance of the hybrid operators $K_{\theta,m,p}$ and $\tilde{K}_{\theta,m,p}$ defined in Section \ref{sec:mix}. We did not include simulations on the performance of these hybrid operators in this paper. However, it is an intriguing future research question to explore how the parameters $p$ and $\theta$ affect the estimations. 

\appendix
\section{Small dimensional examples for computing $\mathbb{E}(\Phi^* (\Phi D_n \Phi^*)^l \Phi)$}\label{sec:small}
In this appendix, we provide small dimensional examples for computing $\mathbb{E}(\Phi^* (\Phi D_n \Phi^*)^l \Phi)$ using formulas derived in Section \ref{sec:formula}.

Let $\lambda_j=(N-j,1^j)$ be the partition of $N$ with $j$ ones. This one has a hook shape with $N-j$ blocks in the row and $j+1$ blocks in the column.
$$\begin{Young} 
 &  & & & & & & & & \cr
 \cr
 \cr
 \cr
\end{Young} $$

For $l=1$, it was shown in \cite{MTS} that 
$$
\mathbb{E}(\Phi^* (\Phi D_n \Phi^*)^l \Phi)=\frac{p(np-1)}{n(n^2-1)} D_n+ \frac{p(n-p)}{n(n^2-1)} \mathrm{Tr}(D_n) I_n.
$$
For $l=2$ and $\rho=(1,1,1), (1,2), (3) \vdash 3$, we list all border--strip tableaux of shape $\lambda_j$ and type $\rho$ in the following table. 

\begin{center}
    \begin{tabular}{ |l| l | l | l| l|}
    \hline
     & $\rho=(1,1,1)$ & $\rho=(1,2)$ & $\rho=(3)$ \\ \hline
    $\lambda_0=(3)$  & $\begin{Young} 1& 2& 3\cr \end{Young}$ &  $\begin{Young} 1& 2& 2\cr \end{Young}$ & $\begin{Young} 1& 1& 1\cr \end{Young}$\\ \hline
    $\lambda_1=(2,1)$  & $\begin{Young} 1& 2 \cr 3\cr \end{Young}$ \,\,\&\,\,  $\begin{Young} 1& 3 \cr 2\cr \end{Young}$&  Does not exist & $\begin{Young} 1& 1\cr 1\cr \end{Young}$\\ \hline
    $\lambda_2=(1,1,1)$  & $\begin{Young} 1\cr 2 \cr 3\cr \end{Young}$ & $\begin{Young} 1\cr 2\cr 2\cr \end{Young}$  & $\begin{Young} 1\cr 1\cr 1\cr \end{Young}$\\ \hline
    \end{tabular}
\end{center}
 
Thus,

\begin{center}
    \begin{tabular}{ |l| l| l| l| }  
    \hline
    $\chi^{\lambda_j}(\rho)$ & $\rho=(1,1,1)$ & $\rho=(1,2)$ & $\rho=(3)$ \\ \hline
    $\lambda_0=(3)$  & 1 & 1 & 1\\ \hline
    $\lambda_1=(2,1)$  & 2 &  0 & -1\\ \hline
    $\lambda_2=(1,1,1)$  & 1 & -1  & 1\\ \hline
    \end{tabular}
\end{center}    

\begin{equation*}
\begin{split}
&s_{\lambda_0}(D) = \frac{\mathrm{Tr}(D)^3}{3!}+\frac{\mathrm{Tr}(D) \mathrm{Tr}(D^2)}{2}+ \frac{\mathrm{Tr}(D^3)}{3}, \quad\quad ~\frac{\partial s_{\lambda_0}}{\partial d_i} = d_i^2+ \mathrm{Tr}(D) d_i + \frac{\mathrm{Tr}(D)^2+\mathrm{Tr}(D^2)}{2}\\
&s_{\lambda_1}(D) =2 \frac{\mathrm{Tr}(D)^3}{3!}- \frac{\mathrm{Tr}(D^3)}{3},\quad\quad ~\frac{\partial s_{\lambda_1}}{\partial d_i} =- d_i^2+ \mathrm{Tr}(D)^2\\
&s_{\lambda_2}(D) = \frac{\mathrm{Tr}(D)^3}{3!}-\frac{\mathrm{Tr}(D) \mathrm{Tr}(D^2)}{2}+ \frac{\mathrm{Tr}(D^3)}{3},\quad\quad ~\frac{\partial s_{\lambda_2}}{\partial d_i} = d_i^2- \mathrm{Tr}(D) d_i +\frac{\mathrm{Tr}(D)^2-\mathrm{Tr}(D^2)}{2}.
\end{split}
\end{equation*}

Furthermore,
\begin{equation*}
\begin{split}
\left( \mathbb{E}(\Phi^* (\Phi D \Phi^*)^2 \Phi)\right)_{ii}&=\frac{1}{3}\sum_{j=0}^2 (-1)^j \frac{(2+p-j)!(n-j-1)!}{(2+n-j)!(p-j-1)!} \frac{\partial s_{\lambda_j}(D)}{\partial d_i}\\
&= (c_0+c_1+ c_2) d_i^2 + (c_0-c_2) \mathrm{Tr}(D) d_i+c_0\frac{\mathrm{Tr}(D)^2+\mathrm{Tr}(D^2)}{2}-c_1\\
&+c_2\frac{\mathrm{Tr}(D)^2-\mathrm{Tr}(D^2)}{2},
\end{split}
\end{equation*}
where 
\begin{equation*}
c_0=\frac{1}{3} \frac{(2+p)! (n-1)!}{(2+n)! (p-1)!},\hspace{0.3cm}
c_1=\frac{1}{3} \frac{(1+p)! (n-2)!}{(1+n)! (p-2)!},\hspace{0.3cm}
c_2=\frac{1}{3} \frac{p! (n-3)!}{n! (p-3)!}.
\end{equation*}

Finally, 
\begin{equation*}
\begin{split}
\mathbb{E}\Big(\Phi^* (\Phi D \Phi^*)^2 \Phi \Big)&=(c_0+c_1+ c_2) D^2 + (c_0-c_2) \mathrm{Tr}(D) D\\
&+\Big(c_0\frac{\mathrm{Tr}(D)^2+\mathrm{Tr}(D^2)}{2}-c_1 \mathrm{Tr} (D)^2+c_2\frac{\mathrm{Tr}(D)^2-\mathrm{Tr}(D^2)}{2} \Big) I_n.
\end{split}
\end{equation*}

\section{Computing $\E \| K_\theta - \Sigma \|_{NF}^2$}\label{appendix:formula}
In this section, we compute the explicit formula for $\E \| K_\theta - \Sigma \|_{NF}^2 = \frac{1}{m}\E \| K_\theta - \Sigma \|_{F}^2$ and express the formula in terms of $\Sigma$. We assume the $m$-dimensional random vector $X$ has the normal distribution $N(0, \Sigma)$. Let $x_1,\ldots, x_n$ be $n$ independent copies of $X$. Recall $M=(x_1,\ldots,x_n)$ and $K=M M^T/n=(a_{ij})$.  Then $$\E \| K_\theta - \Sigma \|_F^2=\sum_{i=1}^m \E (K_\theta- \Sigma)_{ii}^2 + \sum_{i\neq j } \E(K_\theta- \Sigma)_{ij}^2.$$

By Theorem \ref{permutation}, we first have
\begin{align*}
(K_\theta- \Sigma)_{ii}^2 &= \Big( \frac{\theta-1}{\theta+m-1} a_{ii} + \frac{1}{\theta+m-1} \mathrm{Tr} K - \Sigma_{ii} \Big)^2\\
&=  \frac{(\theta-1)^2}{ (\theta+m-1)^2 } a_{ii}^2 + \frac{1}{(\theta+m-1)^2} (\mathrm{Tr} K)^2 + \Sigma_{ii}^2 + \frac{2(\theta-1)}{(\theta+m-1)^2} a_{ii} \tr K\\
&\quad - \frac{2(\theta-1)}{\theta+m-1} a_{ii}\Sigma_{ii} - \frac{2}{\theta+m-1} \Sigma_{ii}\tr K.
\end{align*}
Note that $\Sigma_{ii} = \E a_{ii}$ and $\E \tr K = \sum_{i=1}^m \Sigma_{ii}$. Thus 
\begin{align*}
\sum_{i=1}^m \E (K_\theta- \Sigma)_{ii}^2 &=  \frac{(\theta-1)^2}{ (\theta+m-1)^2 } \big( \sum_{i=1}^m \E a_{ii}^2 \big) + \frac{m \E (\mathrm{Tr} K)^2}{(\theta+m-1)^2}  + \sum_{i=1}^m \Sigma_{ii}^2 + \frac{2(\theta-1)}{(\theta+m-1)^2} \E(\tr K)^2\\
&\quad - \frac{2(\theta-1)}{\theta+m-1} \sum_{i=1}^m \Sigma_{ii}^2 - \frac{2}{\theta+m-1} (\E \tr K)^2.
\end{align*}
Plugging in$$\E (\tr K )^2 = \sum_{i=1}^m \E a_{ii}^2 + \sum_{i \neq j} \E a_{ii} a_{jj},$$ we get
\begin{align*}
\sum_{i=1}^m \E (K_\theta- \Sigma)_{ii}^2 &=  \frac{\theta^2+m-1}{ (\theta+m-1)^2 } \big( \sum_{i=1}^m \E a_{ii}^2 \big) - \frac{\theta - m -1}{\theta+m-1} \sum_{i=1}^m \Sigma_{ii}^2 + \frac{2\theta+m-2}{(\theta+m-1)^2} \sum_{i\neq j} \E a_{ii} a_{jj}\\
&\quad - \frac{2}{\theta+m-1} \big( \sum_{i=1}^m \Sigma_{ii} \big)^2.
\end{align*}
For brevity, denote $\beta= (\theta+m-1)(\theta+m-2)$. Next, by the formula obtained in Theorem \ref{permutation}, we get for $i\neq j$
\begin{align*}
(K_\theta- \Sigma)_{ij}^2 &= \frac{1}{\beta^2}\big( (\theta^2-1) a_{ij} + (\theta-1) a_{ji} + (\theta-1) \sum_{k \neq i,j} (a_{ik}+ a_{kj} ) + \sum_{l \neq k} a_{lk}  -\beta \Sigma_{ij} \big)^2\\
&=\frac{1}{\beta^2}\big( \theta(\theta-1) a_{ij} + (\theta-1) \sum_{k\neq i} a_{ik} + (\theta-1) \sum_{k \neq j} a_{jk} + \sum_{l\neq k} a_{lk} - \beta \Sigma_{ij} \big)^2.
\end{align*}
Expanding the square above and taking the expectation over the sum of all $i\neq j$, one obtains
\begin{align*}
\sum_{i\neq j}\E(K_\theta- \Sigma)_{ij}^2 &=  \frac{1}{\beta^2}\Big[ \theta^2(\theta-1)^2 \sum_{i\neq j} \E a_{ij}^2 + 2(\theta-1)^2 \sum_{i\neq j} \E(\sum_{k\neq i} a_{ik})^2 + m(m-1) \E (\sum_{i\neq j} a_{ij})^2 + \beta^2 \sum_{i\neq j} \Sigma_{ij}^2 \\
&\quad \quad\quad  +4\theta(\theta-1)^2 \sum_{i\neq j} \sum_{k\neq i} \E a_{ij} a_{ik} + 2\theta(\theta-1) \E (\sum_{i\neq j} a_{ij})^2 - 2\beta \theta(\theta-1) \sum_{i \neq j} \Sigma_{ij}^2\\
&\quad \quad\quad  + 2(\theta-1)^2 \sum_{i \neq j} \E (\sum_{k\neq i} a_{ik})(\sum_{k\neq j} a_{jk}) + 4(\theta-1) \E \big( \sum_{i\neq j} \sum_{k\neq i} a_{ik} \big) (\sum_{l\neq k} a_{lk}) \\
&\quad\quad\quad   - 4\beta(\theta-1) \sum_{i\neq j}  \sum_{k\neq i}\Sigma_{ij}\Sigma_{ik}- 2\beta(\sum_{l\neq k} \Sigma_{lk})^2\Big].
\end{align*}
We observe in the above summation that
\begin{align*}
&\sum_{i\neq j} \E(\sum_{k\neq i} a_{ik})^2 = (m-1)\sum_{i=1}^m \E (\sum_{k\neq i} a_{ik})^2,
\end{align*}
$$
\sum_{i\neq j}  \sum_{k\neq i}\E a_{ij} a_{ik} = \sum_{i=1}^m \E (\sum_{k\neq i} a_{ik})^2,
$$
\begin{align*}
\sum_{i \neq j} \E (\sum_{k\neq i} a_{ik})(\sum_{k\neq j} a_{jk})=\E (\sum_{l\neq k} a_{lk})^2 - \sum_{i=1}^m \E (\sum_{k\neq i} a_{ik})^2
\end{align*}
and
\begin{align*}
\E \big( \sum_{i\neq j} \sum_{k\neq i} a_{ik} \big) (\sum_{l\neq k} a_{lk}) = (m-1) \E \big( \sum_{i=1}^m \sum_{k\neq i} a_{ik} \big) (\sum_{l\neq k} a_{lk}) = (m-1) \E (\sum_{l\neq k} a_{lk})^2.
\end{align*}
Thus, after simplification, we get
\begin{align*}
\sum_{i\neq j}\E(K_\theta- \Sigma)_{ij}^2 &=  \frac{1}{\beta^2}\big[ \theta^2(\theta-1)^2 (\sum_{i\neq j} \E a_{ij}^2) + 2(\theta-1)^2(2\theta+m-2) \sum_{i=1}^m \E (\sum_{j\neq i} a_{ij})^2 \\
&\quad+\big[ m(m-1) + 2(\theta-1) (2\theta + 2m-3)  \big] \E(\sum_{i\neq j} a_{ij})^2  - 4\beta(\theta-1)\sum_{i=1}^m (\sum_{i\neq j} \Sigma_{ij})^2 \\
&\quad-2\beta(\sum_{i\neq j} \Sigma_{ij})^2 +\big(\beta^2-2\beta\theta(\theta-1) \big) \sum_{i\ne j} \Sigma_{ij}^2\big].
\end{align*}

Finally, we get the explicit formula 
\begin{align}\label{eq:explicit1}
\E \| K_\theta - \Sigma \|_F^2 &=  \frac{\theta^2+m-1}{ (\theta+m-1)^2 } \big( \sum_{i=1}^m \E a_{ii}^2 \big) - \frac{\theta - m -1}{\theta+m-1} \sum_{i=1}^m \Sigma_{ii}^2 + \frac{2\theta+m-2}{(\theta+m-1)^2} \sum_{i\neq j} \E a_{ii} a_{jj}- \frac{2}{\theta+m-1} \big( \sum_{i=1}^m \Sigma_{ii} \big)^2\nonumber\\
&\quad  + \frac{\theta^2(\theta-1)^2}{(\theta+m-1)^2(\theta+m-2)^2}(\sum_{i\neq j} \E a_{ij}^2) + \frac{2(\theta-1)^2(2\theta+m-2) }{(\theta+m-1)^2(\theta+m-2)^2 } \sum_{i=1}^m \E (\sum_{j\neq i} a_{ij})^2\nonumber\\
&\quad + \frac{2(\theta-1)(2\theta+2m-3)+m(m-1)}{ (\theta+m-1)^2(\theta+m-2)^2} \E(\sum_{i\neq j} a_{ij})^2 - \frac{4(\theta-1)}{(\theta+m-1)(\theta+m-2)} \sum_{i=1}^m(\sum_{j\neq i} \Sigma_{ij})^2 \nonumber\\
&\quad - \frac{2}{(\theta+m-1)(\theta+m-2)} (\sum_{i\neq j}\Sigma_{ij})^2 + \Big[1-\frac{2\theta(\theta-1)}{(\theta+m-1) (\theta+m-2)} \Big](\sum_{i\neq j} \Sigma_{ij}^2).
\end{align}
Since we assume $X=(X^1,\ldots, X^m)^T \sim N(0,\Sigma)$, we can further express \eqref{eq:explicit1} in terms of the entries of $\Sigma$. We use $x_s^i$ to denote the $i$th entry of the vector $x_s$. Note that $a_{ij} = \frac{1}{n}\sum_{s=1}^n x_s^i x_s^j$ by our definition of $K$. Besides, $\E K= \Sigma$. We also use the following facts about  multivariate normal distribution:
\begin{align*}
\E (X^i)^2 = \Sigma_{ii}, \quad \E (X^i)^4 = 3 \Sigma_{ii}^2, \quad \E X^i X^j = \Sigma_{ij}
\end{align*}
and
\begin{align*}
\E X^i X^{k_1} X^j X^{k_2} = \Sigma_{i k_1} \Sigma_{j k_2} + \Sigma_{i j} \Sigma_{k_1 k_2} +  \Sigma_{i k_2} \Sigma_{j k_1} \quad\text{for arbitrary } 1\le i, j, k_1,k_2\le m.
\end{align*}
It is elementary to verify the following calculation.
\begin{align}\label{eq:term1}
\sum_{i=1}^m \E a_{ii}^2 = \frac{1}{n^2}\sum_{i=1}^m \big[ \E \sum_{s=1}^n (x_s^i)^4 + \sum_{s\neq t} \E (x_s^i)^2 \E(x_t^i)^2\big] = \frac{1}{n^2}\sum_{i=1}^m \big[ 3n \Sigma_{ii}^2 + n(n-1) \Sigma_{ii}^2 \big] = \frac{n+2}{ n} \sum_{i=1}^m \Sigma_{ii}^2
\end{align}
and
\begin{align}\label{eq:term2}
\sum_{i\neq j} \E a_{ii} a_{jj} &= \frac{1}{n^2}\sum_{i \neq j} \sum_{s,t=1}^n \E (x_s^i)^2 (x_t^j)^2 = \frac{1}{n^2}\sum_{i\neq j} \big[ n \E (X^i)^2 (X^j)^2 + n(n-1) \Sigma_{ii} \Sigma_{jj}\big] \nonumber\\
&=\sum_{i \neq j} \Sigma_{ii} \Sigma_{jj} + \frac{2}{n} \sum_{i\neq j} \Sigma_{ij}^2
\end{align}
and
\begin{align}\label{eq:term3}
\sum_{i\neq j} \E a_{ij}^2 &=\frac{1}{n^2}\sum_{i\neq j} \big[ n \E (X^i)^2 (X^j)^2 + n(n-1) (\E X^i X^j)^2\big] = \frac{1}{n} \sum_{i\neq j} \Sigma_{ii} \Sigma_{jj} + \frac{n+1}{ n} \sum_{i \neq j} \Sigma_{ij}^2.
\end{align}
Similarly, we also obtain
\begin{align}\label{eq:term4}
\sum_{i=1}^m \E(\sum_{j\neq i} a_{ij})^2 &= \frac{1}{n^2} \sum_{i=1}^m \sum_{j_1, j_2 \neq i} (n \Sigma_{ii} \Sigma_{j_1 j_2} + 2n \Sigma_{i j_1} \Sigma_{i j_2} + n(n-1) \Sigma_{i j_1} \Sigma_{i j_2})  \nonumber\\
&= \frac{1}{n}\sum_{i=1}^m\sum_{j_1,j_2\neq i} \Sigma_{ii} \Sigma_{j_1 j_2} + \frac{n+1}{n} \sum_{i=1}^m (\sum_{j\neq i} \Sigma_{ij})^2
\end{align}
and
\begin{align}\label{eq:term5}
\E (\sum_{i\neq j} a_{ij})^2 &= \frac{1}{n^2} \sum_{i_1\neq j_1,i_2\neq j_2}( n \Sigma_{i_1 j_1} \Sigma_{i_2 j_2} +n \Sigma_{i_1 i_2} \Sigma_{j_1 j_2} +n\Sigma_{i_1 j_2} \Sigma_{i_2 j_1} + n(n-1) \Sigma_{i_1 j_1} \Sigma_{i_2 j_2})\nonumber\\
&= (\sum_{i\neq j} \Sigma_{ij})^2 + \frac{2}{n} \sum_{i_1\neq j_1,i_2\neq j_2} \Sigma_{i_1 i_2} \Sigma_{j_1 j_2}.
\end{align}

Also note that 
$$\sum_{i\neq j} \Sigma_{ii} \Sigma_{jj} = (\sum_{i=1}^m \Sigma_{ii})^2 - \sum_{i=1}^m \Sigma_{ii}^2.$$
Thus we obtain the following formula of $\E \| K_\theta - \Sigma \|_{NF}^2$ by plugging \eqref{eq:term1}-\eqref{eq:term5} to \eqref{eq:explicit1} and dividing $m$ on both sides:
\begin{align}\label{eq:explicit}
\frac{1}{m}\E \| K_\theta - \Sigma \|_F^2 & = \Big[ \frac{(n+2)(\theta^2 + m-1)}{n(\theta+m-1)^2}  - \frac{\theta-m-1}{\theta+m-1} - \frac{2\theta+m-2}{ (\theta+m-1)^2}  - \frac{\theta^2 (\theta-1)^2}{n(\theta+m-1)^2 (\theta+m-2)^2} \Big] \frac{1}{m}\sum_{i=1}^m\Sigma_{ii}^2 \nonumber\\
&\quad+\Big[ \frac{(2\theta+m-2)}{ (\theta+m-1)^2}  + \frac{\theta^2 (\theta-1)^2}{n(\theta+m-1)^2 (\theta+m-2)^2}  -\frac{2}{\theta+m-1}\Big] \frac{1 }{m}(\sum_{i=1}^m \Sigma_{ii})^2 \nonumber\\
&\quad+\Big[ \frac{2(2\theta+m-2)}{ n(\theta+m-1)^2} + \frac{(n+1)\theta^2 (\theta-1)^2}{n (\theta+m-1)^2 (\theta+m-2)^2} +1-\frac{2\theta(\theta-1)}{(\theta+m-1) (\theta+m-2)}\Big] \frac{1}{m}\sum_{i\neq j} \Sigma_{ij}^2 \nonumber\\
&\quad+\Big[\frac{ 2 (n+1)(\theta-1)^2 (2\theta+m-2) }{n(\theta+m-1)^2 (\theta+m-2)^2}- \frac{4(\theta-1)}{ (\theta+m-1) (\theta+m-2)} \Big]\frac{1}{m} \sum_{i=1}^m (\sum_{j\neq i} \Sigma_{ij})^2 \nonumber\\
&\quad+\Big[ \frac{ 2(\theta-1) (2\theta+2m-3) + m(m-1)}{(\theta+m-1)^2 (\theta+m-2)^2} - \frac{2}{(\theta+m-1) (\theta+m-2)}\Big] \frac{1}{m}(\sum_{i\neq j} \Sigma_{ij})^2 \nonumber\\
&\quad+ \frac{ 2 (\theta-1)^2 (2\theta+m-2)}{n(\theta+m-1)^2 (\theta+m-2)^2} \big( \frac{1}{m}\sum_{i=1}^m\sum_{j_1,j_2\neq i} \Sigma_{ii} \Sigma_{j_1 j_2} \big) \nonumber\\
&\quad+ \frac{ 2(\theta-1) (2\theta+2m-3) + m(m-1)}{n(\theta+m-1)^2 (\theta+m-2)^2} \frac{2}{m} \sum_{i_1\neq j_1,i_2\neq j_2} \Sigma_{i_1 i_2} \Sigma_{j_1 j_2}.
\end{align}


\begin{thebibliography}{10}

\bibitem{bai99}
Z. D. Bai.
\newblock Methodologies in spectral analysis of large-dimensional random
  matrices, a review.
\newblock {\em Statist. Sinica}, vol. 9, no. 3, pp. 611--677, 1999.

\bibitem{3} 
V. Betz, D, Ueltschi and Y. Velenik. 
\newblock Random permutations with cycle weights 
\newblock {\em Ann. Appl. Probab.}, vol. 21, no. 1, pp. 312–331, 2011.

\bibitem{BL08a}
P. J. Bickel and E. Levina.
\newblock Regularized estimation of large covariance matrices.
\newblock {\em The Annals of Statistics}, vol. 36, no. 1, pp. 199--227, 2008.  

\bibitem{BL08b}
P. J. Bickel and E. Levina.
\newblock Covariance regularization by thresholding.
\newblock {\em The Annals of Statistics}, vol. 36, no. 6, pp. 2577--2604, 2008.  

\bibitem{BG}
A. B\"ottcher and S. M. Grudsky.
\newblock Spectral properties of banded Toeplitz matrices.
\newblock Society for Industrial and Applied Mathematics, 2005.

\bibitem{BS}
A. B\"ottcher and B. Silbermann.
\newblock Introduction to large truncated Toeplitz matrices.
\newblock Springer Verlag, 1999.

\bibitem{CZZ}
T. T.~ Cai, C. H.~ Zhang, and H. H.~Zhou.
\newblock  Optimal rates of convergence for covariance matrix estimation. 
\newblock {\em The Annals of Statistics}, vol. 38, no. 4, pp. 2118--2144, 2010.

\bibitem{CZ}
T. T.~ Cai and H. H.~Zhou.
\newblock Minimax estimation of large covariance matrices under $l_1$ norm, 
\newblock {\em Statistica Sinica}, 2011.


\bibitem{DS98}
N. R. Draper and H. Smith.
\newblock Applied Regression Analysis (Wiley Series in Probability and
  Statistics).
\newblock Wiley-Interscience, 1998.

\bibitem{EK}
N. El Karoui.
\newblock Operator norm consistent estimation of large-dimensional sparse covariance matrices.
\newblock {\em The Annals of Statistics}, pp. 2717--2756, 2008.

\bibitem{9}
N. Ercolani and D. Ueltschi. 
\newblock Cycle structure of random permutations with cycle weights, 2011.

\bibitem{FK06}
Y. V. Fyodorov and B. A. Khoruzhenko.
\newblock A few remarks on colour--flavour transformations, truncations of random unitary matrices, Berezin reproducing kernels and Selberg-type integrals.
\newblock {\em Journal of Physics A: Mathematical and Theoretical}, 40(4):669, 2007.

\bibitem{Fulton} 
H. Fulton, 
\newblock Representation Theory, 
\newblock {\em Springer}, 1991.

\bibitem{Galambos}
J. Galambos.
\newblock {\em Advanced probability theory}, volume~10.
\newblock CRC, 1995.

\bibitem{Gray}
R. M. Gray.
\newblock {\em Toeplitz and circulant matrices: A review}.
\newblock Information Systems Laboratory, Stanford University, 1971.

\bibitem{KS}
T. Kurmayya and K. C. Sivakumar.
\newblock Moore-penrose inverse of a gram matrix and its nonnegativity.
\newblock {\em Journal of Optimization Theory and Applications},
  vol. 139, no. 1, pp.201--207, 2008.
  
\bibitem{LW02}
O. Ledoit and M. Wolf.
\newblock Some hypothesis tests for the covariance matrix when the dimension is
  large compared to the sample size.
\newblock {\em Annals of statistics}, pp. 1081--1102, 2002.

\bibitem{LW04}
O.~Ledoit and M.~Wolf.
\newblock A well-conditioned estimator for large-dimensional covariance matrices.
\newblock {\em Journal of multivariate analysis}, vol. 88, no. 2, pp. 365--411, 2004.  

\bibitem{Mac} I. Macdonald, 
\newblock Symmetric functions and Hall Polynomials 
\newblock {\em Clarendon Press}, 
Oxford University Press, New York, 1995.

\bibitem{MTS}
T.~Marzetta, G.~Tucci, and S.~Simon.
\newblock A random matrix--theoretic approach to handling singular covariance
  estimates,
\newblock {IEEE Transactions on Information Theory}, vol. 57, no. 9, pp. 6256--6271, 2011.

\bibitem{mestre08}
X.~Mestre.
\newblock Improved estimation of eigenvalues and eigenvectors of covariance
  matrices using their sample estimates.
\newblock {\em Information Theory, IEEE Transactions on}, vol. 54, no. 11, pp. 5113--5129,
  2008.

\bibitem{MD}
X.~Mestre and M. A. Lagunas.
\newblock Diagonal loading for finite sample size beamforming: an asymptotic
  approach.
\newblock {\em Robust adaptive beamforming}, pp. 201--257, 2006.

\bibitem{Muir} 
R. Muirhead. 
\newblock Aspects of Multivariate Statistical Theory. 
\newblock {\em John Wiley \& Sons}, New York, 1982.

\bibitem{RNE}
C. D. Richmond, R. Rao Nadakuditi, and A. Edelman.
\newblock Asymptotic mean squared error performance of diagonally loaded
  capon--mvdr processor.
\newblock In {\em Signals, Systems and Computers, 2005. Conference Record of
  the Thirty-Ninth Asilomar Conference on}, pp. 1711--1716, 2005.
  
\bibitem{RBLZ}
A. J.~Rothman, P.J.~Bickel, E.~Levina and J.~Zhou.
\newblock  Sparse permutation invariant covariance estimation.
\newblock {\em Electronic Journal of Statistics}, vol. 2, pp. 494--515, 2008.  

\bibitem{Sagan} 
B. Sagan.
\newblock The Symmetric Group: Representations.
\newblock {\em Combinatorial Algorithms, and Symmetric Functions},
Springer, 2nd edition, 2010.

\bibitem{SS}
J. SchŠfer and K. Strimmer
\newblock A shrinkage approach to large-scale covariance matrix estimation and implications for functional genomics.
\newblock {\em Statistical applications in genetics and molecular biology}, 4.1 (2005): 32.

\bibitem{Stanley}
R. P. Stanley.
\newblock Enumerative Combinatorics: Volume 2.
\newblock {\em Cambridge university press Cambridge}, 1999.

\bibitem{SS90}
G. W. Stewart and J. G. Sun.
\newblock Matrix perturbation theory.
\newblock {\em Academic Press}, 1990.

\bibitem{SN1}
P. Stoica and A. Nehorai.
\newblock MUSIC maximum likelihood and CramŽr-Rao bound.
\newblock{\em IEEE Trans. Acoust. Speech Signal Processing}, vol. 37, pp. 720-741, 1989.

\bibitem{SN2}
P. Stoica and A. Nehorai.
\newblock Performance study of conditional and unconditional direction-of-arrival estimation.
\newblock{\em IEEE Trans. Acoust. Speech Signal Processing}, vol. 38, pp. 1783-1795, 1990.


\bibitem{Viana}
M. A. G. Viana.
\newblock The covariance structure of random permutation matrices.
\newblock {\em Algebraic methods in statistics and probability: AMS Special
  Session on Algebraic Methods and Statistics, April 8--9, 2000, University of
  Notre Dame, Notre Dame, Indiana}, pp. 287--303, 2001.
  
\bibitem{Wieand1}
K. Wieand.
\newblock Eigenvalue distributions of random permutation matrices.
\newblock {\em The Annals of Probability}, 28.4 (2000): 1563-1587.

\bibitem{Wieand2}
K. Wieand.
\newblock Eigenvalue distributions of random unitary matrices.
\newblock {\em Probability Theory and Related Fields}, 123.2 (2002): 202-224.

\bibitem{WP}
W. B.~Wu and M.~ Pourahmadi.
\newblock Banding sample autocovariance matrices of stationary processes
\newblock {\em Statistica Sinica}, vol. 19, no. 4, pp. 1755, 2009.


  

\end{thebibliography}
\end{document}